\documentclass{amsart}[12pt]
\usepackage{amsmath,amssymb,amsthm,hyperref,mathtools}
\usepackage{color}
\usepackage{cleveref}

\usepackage{tikz}
%
\newtheorem{thm}{Theorem}[section]
\newtheorem{cor}[thm]{Corollary}

\newtheorem{lem}[thm]{Lemma}
\newtheorem{prop}[thm]{Proposition}
\theoremstyle{remark}

\theoremstyle{definition}

\newtheorem{defn}[thm]{Definition}
\newtheorem{notn}[thm]{Notation}



%

\title{Walecki tournaments with an arc that lies in a unique directed triangle}

\author{Joy Morris}

\thanks{Supported by the Natural Science and Engineering Research Council of Canada (grant RGPIN-2024-04013).}

\address{Department of Mathematics and Computer Science\\
	University of Lethbridge\\
	Lethbridge, AB T1K 3M4\\
	Canada}

\email{joy.morris@uleth.ca}

\begin{document}

\begin{abstract}
A Walecki tournament is any tournament that can be formed by choosing an orientation for each of the Hamilton cycles in the Walecki decomposition of a complete graph on an odd number of vertices. In this paper, we show that if some arc in a Walecki tournament on at least $7$ vertices lies in exactly one directed triangle, then there is a vertex of the tournament (the vertex typically labelled $*$ in the decomposition) that is fixed under every automorphism of the tournament. Furthermore, any isomorphism between such Walecki tournaments maps the vertex labelled $*$ in one to the vertex labelled $*$ in the other.

We also show that among Walecki tournaments with a signature of even length $2k$, of the $2^{2k}$ possible signatures, at least $2^k$ produce tournaments that have an arc that lies in a unique directed triangle (and therefore to which our result applies).
\end{abstract}

\keywords{Tournaments, Walecki decomposition, Walecki tournaments, automorphisms, isomorphisms}

\subjclass[2020]{05C20,05C38,05C45,05C60}

\maketitle

\begin{center}
\textit{To Brian Alspach, who has an enduring soft spot for this problem.}
\end{center}

\section{Introduction}

Walecki tournaments were introduced by Alspach in his PhD thesis in 1966 \cite{alspach:1}. They are orientations of the complete graph $K_n$ that arise from Walecki's elegant decomposition of $K_n$ into Hamilton cycles, when $n$ is odd. 

More precisely, given an integer $m \ge 1$, take $n=2m$ and $N=2m+1$. We identify the vertices of $K_N$ with the elements of $\mathbb Z_n \cup \{*\}$. Define the permutation $$\rho=(0\ 1\ 2 \ \cdots \ n-1)$$ and the Hamilton cycle $$C_0=[*, 0, 1, -1, 2, -2, \ldots, m, *].$$ Take $C_0^+$ to be the cycle $C_0$ oriented in the direction we have given, and $C_0^-$ to be the opposite orientation. Define $C_j=C_0\rho^j$, with $C_j^+$ and $C_j^-$ defined accordingly. Note that $C_j=C_{j+m}$, so in the remainder of the paper when we are considering undirected cycles we may take the subscript modulo $m$, but $C_j^-=C_{j+m}^+$. For any binary string $u \in \mathbb Z_2^m$, with entries $u_0, \ldots, u_{m-1}$, we can form a tournament $W_u$ on the vertices of $K_N$ by choosing the arcs that are in $C_j^+$ if $u_j=1$, and the arcs that are in $C_j^-$ if $u_j=0$. For each string $u \in \mathbb Z_2^m$, this defines a tournament that we call a Walecki tournament. We refer to the binary string $u$ as the \emph{signature} of the tournament.

The initial motivation for studying Walecki tournaments was a conjecture by Kelly that every regular tournament can be decomposed into directed Hamilton cycles. Walecki tournaments are examples of regular tournaments that admit a particularly symmetric decomposition into directed Hamilton cycles, by construction.

Although Walecki tournaments have not been much studied, research on them has focussed on understanding their automorphisms and isomorphisms. One of the first observations about isomorphisms of Walecki tournaments arises from the so-called complementing register shift map acting on the signature of the tournament. For a binary string $u \in \mathbb Z_2^m$, define
$$uR_1=(1-u_m)u_0u_1\cdots u_m-1,\text{ and }R_i=R_1^i.$$
That is, the complementing register shift $R_i$ shifts every entry of the binary string $i$ positions to the right cyclically, and if that results in the entry wrapping past the final entry back to the beginning, then the value of the entry is changed (to its complement). Alspach made the following observation in \cite{alspach:1}

\begin{prop}[Alspach, \cite{alspach:1}]\label{shift}
The tournaments $W_u$ and $W_{uR_i}$ are isomorphic for any integer $i$.
\end{prop}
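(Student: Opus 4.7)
The natural plan is to show that the permutation $\rho$ itself, viewed as a map on the vertex set $\mathbb Z_n\cup\{*\}$, is an isomorphism from $W_u$ to $W_{uR_1}$, and then iterate. Since $R_i=R_1^i$, once this is done for $i=1$ the general case follows immediately: if $\rho\colon W_u\to W_{uR_1}$ is an isomorphism for every signature, then applying the result $i$ times yields $\rho^i\colon W_u\to W_{uR_i}$.

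The key observation is that $\rho$ permutes the Walecki cycles: by definition $C_j=C_0\rho^j$, so applying $\rho$ to all vertices of $C_j$ yields $C_{j+1}$. Moreover $\rho$ preserves the direction of traversal, so that as directed cycles $C_j^+\rho=C_{j+1}^+$ for every integer $j$. First I would verify this carefully by writing out $C_0\rho$ and noting that following a single arc $v\to w$ of $C_0^+$ sends it to the arc $\rho(v)\to\rho(w)$ of $C_1^+$. Since $\rho$ fixes $*$, the endpoint structure of the cycle is preserved.

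Next, I would unpack what this means at the level of signatures. For $j\in\{0,1,\dots,m-2\}$, the arc set of $C_j^{\pm}$ (whichever is chosen by $u_j$) is carried by $\rho$ to the arc set of $C_{j+1}^{\pm}$ with the same sign; hence the coordinate of the new signature in position $j+1$ equals $u_j$. The one spot where care is required is the wraparound at $j=m-1$: here $\rho$ sends $C_{m-1}^{\pm}$ to $C_m^{\pm}$, and using the identity $C_m^-=C_0^+$ (equivalently $C_m^+=C_0^-$) recalled in the preamble, this is $C_0^{\mp}$. Thus the zeroth coordinate of the new signature is $1-u_{m-1}$, exactly the complementation appearing in the definition of $R_1$. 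Comparing with the stated formula for $uR_1$ confirms that the image tournament is $W_{uR_1}$.

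The only real obstacle is bookkeeping the wraparound correctly, which comes down to the single identity $C_m^+=C_0^-$; everything else is a direct verification. Once the $i=1$ case is established, the composition $\rho^i$ gives an explicit isomorphism $W_u\to W_{uR_i}$, completing the proof.
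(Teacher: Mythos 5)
Your proof is correct. The paper gives no proof of this proposition (it is quoted from Alspach's thesis), but your argument --- that $\rho$ carries the directed cycle $C_j^{\pm}$ to $C_{j+1}^{\pm}$, with the identity $C_m^{+}=C_0^{-}$ accounting for the complementation at the wraparound, so that $\rho^i$ is an explicit isomorphism $W_u\to W_{uR_i}$ --- is exactly the intended standard argument.
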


This was the first observation suggesting that isomorphisms and automorphisms of Walecki tournaments may bear a close relationship to symmetries and anti-symmetries of the signature. This focus continued in \cite{ales:1, ales:2} and related unpublished work by Ales. Ales' work focuses on the situation where there is periodicity in the signature; that is, Walecki tournaments in which the signature can be broken down as the concatenation of some number of copies of either $v$ or $\bar{v}$, where $v$ is a shorter binary string and $\bar{v}$ is the binary string obtained by replacing every $1$ in $v$ with a $0$, and every $0$ with a $1$. Let $1_m$ denote the binary string with $m$ entries all of which are $1$. In~\cite{ales:2}, the automorphisms of $W_{1_m}$ are completely determined (and therefore, by~\Cref{shift}, so are the automorphisms of $W_{1_mR_i}$ for every integer $i$). The map
$$\sigma=(0 \ 1\ 2\ \cdots \ m-1)(n-1\ n-2 \ n-3 \ \cdots\  m)$$
is important in this result and more broadly in automorphisms found in Ales' work.

\begin{thm}[Ales, \cite{ales:2}, Theorem 3.7]\label{0-sig}
The automorphism group of $W_{1_m}$ is as follows:
\begin{enumerate}
\item if $m=1$, then $W_1$ is a directed cycle of length $3$ and its automorphism group is $C_3=\langle (0 \ 1\ *)\rangle$;
\item if $m=2$, then $W_{11}$ lies in the unique isomorphism class of Eulerian orientations of $K_5$ and its automorphism group is $C_5=\langle (0\ 1\ 3\ 2\ *) \rangle$;
\item if $m \ge 3$ is odd, then the automorphism group of $W_{1_m}$ is $C_m =\langle \sigma \rangle$; and
\item if $m \ge 4$ is even, then the automorphism group of $W_{1_m}$ is trivial (that is, $W_{1_m}$ is asymmetric).
\end{enumerate}
\end{thm}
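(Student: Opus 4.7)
The plan is to dispose of the small cases $m=1$ and $m=2$ by direct inspection: $W_1$ is a directed $3$-cycle with automorphism group $C_3$, while $W_{11}$ is the unique regular tournament on $5$ vertices up to isomorphism (the Paley tournament on $5$ vertices), whose automorphism group is $C_5$. For $m \ge 3$ I would proceed in three steps: verify the action of $\sigma$; show $*$ is fixed by every automorphism; and classify the stabilizer of $*$.

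For the first step, describe the arcs of each $C_j^+$ explicitly. Aside from the special arcs $* \to j$ and $j+m \to *$, every arc of $C_j^+$ has the form $j+k \to j-k$ (for $k = 1, \ldots, m-1$) or $j-k \to j+k+1$ (for $k = 0, \ldots, m-1$), with indices in $\mathbb{Z}_n$. One checks by direct inspection that $\sigma$ preserves each arc type when $m$ is odd, giving $\langle \sigma \rangle \le \Aut{W_{1_m}}$. When $m$ is even, a parity obstruction at the boundary (where the special vertex $m \equiv -m$ sits) sends some arc to its reverse, so $\sigma$ is not an automorphism; for instance when $m=4$, $\sigma$ maps the arc $3 \to 5$ (present in $W_{1_4}$) to $0 \to 4$, which is not an arc of $W_{1_4}$.

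For the second step, the key is to find an invariant distinguishing $*$ from every other vertex. The vertex $*$ has a special positional role in each $C_j$ as the unique vertex incident to both the first and last arcs of the zigzag structure; one expects this to translate into some countable difference, such as a triangle count of a given type or a structural feature of the tournaments induced on the in- and out-neighborhoods. Once $*$ is fixed, every automorphism restricts to permutations of the out-neighborhood $\{0, \ldots, m-1\}$ and the in-neighborhood $\{m, \ldots, 2m-1\}$.

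For the third step, analyze the sub-tournaments induced on these two sets, together with the bipartite arc structure between them, to force the permutation to be a power of $\sigma$ for odd $m \ge 3$ and the identity for even $m \ge 4$. The main obstacle is Step 2: pinning down $*$ via a carefully chosen invariant does not look routine, particularly because the result must hold for every parity of $m \ge 3$ simultaneously. Once $*$ is fixed, Step 3 becomes combinatorial bookkeeping on the structured sub-tournaments, with the odd/even dichotomy tracing back to when $\sigma$ itself qualifies as an automorphism in Step 1.
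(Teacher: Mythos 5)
This statement is not proved in the paper at all: it is quoted verbatim from Ale\v{s} \cite{ales:2}, Theorem 3.7, so there is no in-paper proof to compare against. Judged on its own terms, your proposal is an outline rather than a proof, and the outline leaves the hard part open. Your Step 1 is fine in substance: the arc description of $C_j^+$ is correct, and your $m=4$ counterexample checks out ($3\to 5$ lies in $C_0^+$, while the edge $\{0,4\}$ has endpoint-sum $4$ and so lies in $C_2$, where the arc is $4\to 0$; hence $\sigma$ does not preserve $W_{1_4}$). But Steps 2 and 3 are where the theorem actually lives, and you do not carry out either one. For Step 2 you say only that ``one expects'' some invariant to distinguish $*$, and you yourself flag this as the main obstacle; no invariant is produced, and the paper's introduction explicitly identifies pinning down $*$ as the hard part of the whole subject. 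For Step 3 you say the stabilizer analysis is ``combinatorial bookkeeping'' without doing any of it, and in particular you give no argument that the stabilizer of $*$ is no larger than $\langle\sigma\rangle$ for odd $m$ or trivial for even $m$. As it stands the proposal establishes only $\langle\sigma\rangle\le\Aut{W_{1_m}}$ for odd $m$ and $\sigma\notin\Aut{W_{1_m}}$ for even $m$, which is far short of determining the full automorphism group.

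Two further remarks. First, the machinery of this paper could be used to fill your Step 2 in most cases: \Cref{parallel} and \Cref{unique-star} show that for odd $m$ the arc between $0$ and $m$ in $W_{1_m}$ lies in a unique directed triangle whose third vertex is $*$, and \Cref{main-cor} then forces every automorphism to fix $*$; for even $m=2k$ with $k\ge 3$ the same follows because $1_{2k}R_k\in\mathcal S$. That still leaves $m=4$ to handle separately and leaves Step 3 entirely open. Second, a small factual slip: there is no Paley tournament on $5$ vertices (the Paley construction requires $q\equiv 3\pmod 4$, and $-1$ is a square mod $5$); the correct statement, which is what the theorem asserts, is simply that there is a unique regular tournament on $5$ vertices up to isomorphism, with automorphism group $C_5$.
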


In Ales' work, and in unpublished work the author undertook with Ales, Alspach, and Steve Wilson, it has seemed possible that with a few small exceptions, all automorphisms of Walecki tournaments might arise from the two basic permutations: $\rho$ and $\sigma$;
in fact all of the automorphisms that were found in that previous work were powers of $\rho$ or $\sigma$, or some power of $\sigma$ conjugated by some power of $\rho$ (again, with the two exceptions listed in~\Cref{0-sig}, and two other small exceptions).

This idea supported a conjecture that Alspach had made verbally: that again with a few small exceptions, every automorphism of a Walecki tournament $W_u$ fixes the vertex $*$, and so every isomorphism between Walecki tournaments maps the vertex labelled $*$ in one, to the vertex labelled $*$ in the other. Proving this conjecture (if it is even true) seems to be the ``hard" part of characterising automorphisms and isomorphisms for Walecki tournaments. The other two exceptional cases are as follows:
$W_{101}$ has an affine group of order $21$ acting transitively on its vertices; and the automorphism group of $W_{1011}$ is isomorphic to $C_3$, fixing 3 vertices and acting as two disjoint cycles of length $3$ on the remaining $6$ vertices. (This last action does fix the vertex $*$, and in fact falls into the situation we consider in this paper.) 

In this paper, we present a class of signatures such that the automorphism group of any Walecki tournament with one of these signatures fixes at least three of the vertices of the tournament, including the vertex $*$. It may be the case that with the exception of $W_{1011}$, the tournaments with these signatures all have trivial automorphism groups; certainly any automorphisms they may have do not arise directly from just $\rho$ and $\sigma$. 

In Section 2, we consider in some detail ways of understanding the triangles that contain a particular arc, and present some elementary results about the numbers of triangles that take various forms. In Sections 3, 4, and 5, we consider various locations in a Walecki tournament in which an arc may lie, and show that in almost all of these locations, any arc we look at must lie in more than one directed triangle. In fact, in all cases we conclude that if there is a unique directed triangle containing our arc $a$, then the third vertex of that triangle must be $*$; this central result of the paper is pulled together and proved in Section 6. In Section 7, we determine a family of signatures that produce Walecki tournament that have an arc that lies in a unique directed triangle, and prove that this is the case. We conclude the paper with some broad observations and open problems.

\section{Triangle types}

In a tournament, there are two basic types of cycles of length $3$ that can appear. If the subdigraph induced on the three vertices is regular, we refer to this as a \emph{directed triangle}. If not, then we refer to it as a \emph{transitive triangle}. 

From the perspective of any arc in the triangle, a directed triangle appears the same: the other two arcs form a directed path from the head of the given arc to the tail of the given arc. However, in a transitive triangle, each arc has a unique role. From the perspective of one arc, both of the other arcs point away, to the remaining vertex. We refer to this situation as an ``out" triangle. From the perspective of a different arc, both of the other arcs point toward this arc, from the remaining vertex. We refer to this situation as an ``in" triangle. Finally, from the perspective of the remaining arc, the other two arcs form a directed path from the tail of the given arc to its head; we refer to this situation as a ``bypass" triangle, adopting this terminology from~\cite{ARR}. In that paper, the arcs whose perspective we are considering here are referred to as bypass arcs, on the basis that such an arc forms a direct route from its starting vertex to its terminal vertex, bypassing the other vertex involved in the directed path of length $2$ given by the other two arcs of the bypass triangle. Often we use this more intuitive description that $a$ is a bypass arc in a particular triangle, rather than saying that the triangle is a bypass triangle from the perspective of $a$.

We establish some notation for the numbers of triangles of each of these types that contain a given arc. Since this is a key concept of this paper, this notation will be used in the statements of most of our results.

\begin{notn}
Let $\Gamma$ be a tournament, and let $a$ be an arc of $\Gamma$. Then we will use: 
\begin{itemize}
\item $i(a)$ to  denote the number of triangles containing $a$ that are ``in" triangles from the perspective of $a$;
\item $o(a)$ to denote the number of triangles containing $a$ that are ``out" triangles from the perspective of $a$; 
\item $b(a)$ to denote the number of triangles that are ``bypass" triangles from the perspective of $a$; and 
\item $d(a)$ to denote the number of triangles containing $a$ that are directed triangles from the perspective of $a$.
\end{itemize}
\end{notn}

There are some very nice relationships between these parameters in any regular tournament.

\begin{lem}\label{triangle-types}
Let $\Gamma$ be a regular tournament on $2m+1$ vertices, and let $a$ be an arc of $\Gamma$. 
Then $0 \le i(a)\le m-1$, and:
\begin{itemize}
\item $b(a)=m-1-i(a)$;
\item $d(a)=m-i(a)$; and
\item $o(a)=i(a)$.
\end{itemize}
\end{lem}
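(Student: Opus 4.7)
The plan is to classify the $2m-1$ vertices of $\Gamma$ other than the two endpoints of the arc $a$ according to their adjacency relationships with those two endpoints, and then extract four linear equations from the regularity hypothesis.

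Write $a=(u,v)$. For each vertex $w \notin \{u,v\}$, exactly one of the following four configurations occurs: both $u \to w$ and $v \to w$ (contributing to $o(a)$); both $w \to u$ and $w \to v$ (contributing to $i(a)$); $u \to w$ and $w \to v$ (contributing to $b(a)$); or $v \to w$ and $w \to u$ (contributing to $d(a)$). This immediately yields
\[ o(a) + i(a) + b(a) + d(a) = 2m-1. \]

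Next I would use regularity to obtain four more equations. Since $\Gamma$ is regular of degree $m$, counting the out-neighbors of $u$ other than $v$ gives $o(a) + b(a) = m-1$, and counting the in-neighbors of $u$ (none of which is $v$) gives $i(a) + d(a) = m$. Applying the same reasoning at $v$ gives $o(a) + d(a) = m$ and $i(a) + b(a) = m-1$. Subtracting appropriate pairs yields $o(a) = i(a)$, and substituting back gives $b(a) = m-1-i(a)$ and $d(a) = m-i(a)$. The bounds $0 \le i(a) \le m-1$ then follow because $i(a)$ and $b(a)$ are nonnegative integers.

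I do not anticipate a serious obstacle here: the argument is a routine double-counting by the endpoints of $a$, and the only thing to be careful of is correctly identifying which pair of arc directions corresponds to each triangle type as defined in the preceding discussion (in particular distinguishing the bypass type from the directed type based on which endpoint of $a$ is the tail of the length-two path through $w$).
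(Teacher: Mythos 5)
Your proof is correct and takes essentially the same approach as the paper's: classify the vertices other than the endpoints of $a$ by their adjacencies to those endpoints, and use regularity to extract linear relations among $i(a)$, $o(a)$, $b(a)$, and $d(a)$. Your identification of each adjacency configuration with the corresponding triangle type matches the paper's definitions, and the paper's own proof uses three of your four counting equations (in-neighbours of the head, in-neighbours of the tail, and out-neighbours of the head) to reach the same conclusions.
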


\begin{proof}
Suppose that $a=(i,j)$. Then $j$ has $m-1$ other inneighbours, each of which forms either a ``bypass" or ``in" triangle from the perspective of $a$ (when put together in an induced subdigraph with $a$). Furthermore, none of the $m$ outneighbours of $j$ forms a ``bypass" or ``in" triangle with $a$ from the perspective of $a$. Thus, $i(a)+b(a)=m-1$, so $b(a)=m-1-i(a)$, and $0 \le i(a)\le m-1$.

Similarly, if we consider the $m$ inneighbours of $i$, each forms either a directed or ``in" triangle from the perspective of $a$, and none of the outneighbours of $i$ can form a directed or ``in" triangle with $a$ from the perspective of $a$. Thus $i(a)+d(a)=m$, and so $d(a)=m-i(a)$.

Finally, if we consider the $m$ outneighbours of $j$, each forms either a directed or ``out" triangle from the perspective of $a$, and none of the inneighbours of $j$ can form a directed or ``out" triangle with $a$ from the perspective of $a$, so $o(a)+d(a)=m$. Putting this together with the previous conclusion, we see that $o(a)=i(a)$.
\end{proof}

In the remainder of this paper, we will repeatedly need to focus on two sorts of directed cycles in the tournaments we study: directed cycles of length $3$, as considered in this section, and the directed Hamilton cycles $C_i^+$ and $C_i^-$ for various values of $i$. There may be many other directed cycles in a tournament, but we ignore all of these. For clarity, whenever we are referring to a directed cycle of length $3$ we call it a directed triangle. Whenever we refer to a directed cycle containing a particular vertex or arc, we mean the directed Hamilton cycle $C_i^+$ or $C_i^-$ for some $i$.

There are a couple of particularly important consequences of~\Cref{triangle-types} that we state as a corollary for clarity and ease of reference. These arise from noting that the formulas for $d(a)$ and $b(a)$ yield $d(a)=b(a)+1$.

\begin{cor}\label{cor-triangle-types}
Let $\Gamma$ be a regular tournament on $2m+1$ vertices, and let $a$ be an arc of $\Gamma$. 
Then $d(a) \ge 1$. Furthermore if $b(a) \ge 1$, then $d(a) \ge 2$.
\end{cor}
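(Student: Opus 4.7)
The plan is to derive both statements directly from \Cref{triangle-types} by simple arithmetic on the three formulas it provides. I would not need any new combinatorial input beyond what that lemma already supplies.

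For the first claim, I would use the bound $i(a) \le m-1$ from \Cref{triangle-types} together with the identity $d(a) = m - i(a)$. Substituting the bound gives $d(a) \ge m - (m-1) = 1$, which is exactly the statement $d(a) \ge 1$.

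For the second claim, I would compute the difference
\[
d(a) - b(a) = (m - i(a)) - (m-1-i(a)) = 1,
\]
so $d(a) = b(a) + 1$ for any arc $a$ of any regular tournament on $2m+1$ vertices. Consequently, if $b(a) \ge 1$, then $d(a) = b(a)+1 \ge 2$, as desired.

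The main obstacle is essentially nonexistent here: the corollary is a purely arithmetic repackaging of \Cref{triangle-types}, highlighted precisely so that later sections can invoke the contrapositive (namely, that an arc lying in a unique directed triangle must have $b(a)=0$). The only care to take is to record the identity $d(a) = b(a)+1$ cleanly, since it is this identity — more than the inequalities themselves — that will be used repeatedly in the sections that follow.
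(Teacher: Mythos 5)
Your proposal is correct and matches the paper's reasoning exactly: the corollary is derived purely arithmetically from \Cref{triangle-types}, with the key observation being the identity $d(a) = b(a) + 1$ (the paper notes precisely this identity as the source of both claims). Nothing further is needed.
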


In the remainder of this paper, we will show that for certain signature types, in the resulting Walecki tournament we can show that for every arc $a$ such that $d(a)=1$, the third vertex of the directed triangle is $*$. This implies that any such arc must be mapped to such an arc by every automorphism of the tournament. Furthermore, the unique directed triangle that contains such an arc must also be mapped to another such directed triangle by every automorphism of the tournament, so the third vertex of such a triangle must be mapped to the third vertex of such a triangle. Since the third vertex must be $*$ in both triangles, this implies that in these Walecki tournaments, every automorphism fixes $*$.

\section{Arcs that include $*$ or consecutive vertices}

Our goal in this section is to show that if an arc includes the vertex $*$ or lies between two consecutive vertices, then it must lie in more than one directed triangle. When we say that two vertices are consecutive, we mean that they are identified with consecutive elements of $\mathbb Z_n$.

\begin{lem}\label{star}
Let $a$ be an arc in a Walecki tournament such that one of the endpoints of $a$ is $*$. Then $d(a) >1$.
\end{lem}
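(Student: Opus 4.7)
The plan is to reduce to finding a bypass triangle through $a$ and then exhibit one using the structure of the Walecki decomposition. By Corollary~\ref{cor-triangle-types}, it suffices to prove $b(a)\ge 1$. Moreover, reversing every arc of $W_u$ produces the Walecki tournament $W_{\bar u}$ (with $\bar u$ the complementary signature), and this reversal preserves the count of directed triangles through each arc. Hence I may assume without loss of generality that $a=(*,v)$ for some $v\in \mathrm{Out}(*)$, and the goal reduces to producing a vertex $w\in \mathrm{Out}(*)\setminus\{v\}$ with $w\to v$.

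The principal observation driving the proof is that the Walecki cycle $C_{(v-1)\bmod m}$ contains \emph{both} of the edges $\{*,v-1\}$ and $\{v-1,v\}$ as consecutive cycle-edges adjacent to the copy of $v-1$ sitting next to $*$. Consequently a single signature bit $u_{(v-1)\bmod m}$ determines both orientations simultaneously. A short case analysis (on whether $v-1<m$ or $v-1\ge m$, and on the value of this bit) shows that $w=v-1$ either satisfies $v-1\in \mathrm{Out}(*)$ with $v-1\to v$ (in which case we have the desired bypass and we are done), or satisfies $v-1\in \mathrm{In}(*)$ with $v\to v-1$ (in which case the triangle $(*,v,v-1)$ is directed but does not produce a bypass). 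In the first branch, the proof is complete; in the second, more work is needed.

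For the second branch I use that we have now forced $v-1\in \mathrm{In}(*)$, and hence the diametric partner $v+m-1$ lies in $\mathrm{Out}(*)$ (since exactly one of any pair $\{x,x+m\}$ lies in $\mathrm{Out}(*)$). The orientation of the chord $\{v,v+m-1\}$ is controlled by an \emph{independent} signature bit, namely the one indexing the cycle that contains this chord in the Walecki decomposition. I compute which cycle that is (the answer depends on the parity of $m$) and determine the orientation: if the chord points from $v+m-1$ to $v$, the vertex $w=v+m-1$ supplies the required bypass; otherwise, I turn to a further candidate, such as $v+1$ or another short-distance cousin of $v$, each controlled by yet another signature bit. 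The combinatorics of $\mathrm{Out}(*)$ together with $m\ge 3$ should force one of these candidates to work. The main obstacle I anticipate is the bookkeeping of which cycle contains each chord of the form $\{v,w\}$, since the indexing depends on the parity of $m$ and on arithmetic modulo $m$, and boundary cases $v\in\{0,m\}$ (where indices wrap around and some of the named candidate vertices coincide) will require separate verification.
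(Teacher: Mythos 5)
Your opening moves are sound and match the paper's: the reduction to exhibiting a bypass triangle is legitimate (by \Cref{triangle-types}, $b(a)=d(a)-1$, so $b(a)\ge 1$ is equivalent to what must be shown, and \Cref{cor-triangle-types} gives the implication you need), the arc-reversal symmetry correctly lets you assume $a=(*,v)$, and your first candidate $w=v-1$ is exactly the paper's first step: the single bit governing $C_{v-1}$ either yields the bypass $*\to v-1\to v$ or forces $v-1\to *$ and $v\to v-1$, i.e.\ the directed triangle $(*,v,v-1)$.

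The second branch, however, is where the proof actually lives, and there you have a genuine gap rather than a proof. You propose trying $w=v+m-1$ and, failing that, ``a further candidate, such as $v+1$ or another short-distance cousin,'' asserting that the combinatorics ``should force one of these candidates to work.'' This is not verified, and it cannot be patched by pigeonhole: $v$ has only $m-1$ inneighbours in $\mathbb Z_n$ and $\mathrm{Out}(*)\setminus\{v\}$ has $m-1$ elements inside a set of size $2m-1$, so nothing forces them to meet without using the cycle structure. Worse, your named fallback $v+1$ can never supply a bypass for $(*,v)$: since $*\to v$ comes from $C_v^+$, that same cycle forces $v\to v+1$, so $v+1$ is an outneighbour of $v$. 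The paper closes the argument not by hunting for a bypass but by a forcing chain that produces a \emph{second directed triangle}: if $(v+m,*,v)$ is not already a second directed triangle then $v+m\to v$, which lies in a specific Hamilton cycle that therefore also contains a forced arc out of $v$ to a determined vertex; avoiding a directed triangle through that vertex forces an arc out of $*$, which lies in the cycle $C_{v+1}^-$ and hence forces $v+1\to *$; combined with $v\to v+1$ from $C_v^+$ this yields the directed triangle $(*,v,v+1)$. That chain of forced orientations is the idea missing from your sketch, and without it (or an equivalent verified case analysis) the second branch remains open.
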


\begin{proof}
Towards a contradiction, suppose that $a=(*,i)$ is in exactly one directed triangle, for some $i \in \mathbb Z_n$. If $(*,i-1)$ is an arc of $W_u$ then $(i-1,i)$ is also an arc of $W_u$ (these are both in $C_{i-1}^+$) and this produces a triangle in which $a$ is a bypass arc, so $b(a) \ge 1$, and by~\Cref{cor-triangle-types} $d(a) \ge 2$, the desired contradiction. So we must have the arcs $(i,i-1)$ and $(i-1,*)$ in $W_u$. Thus $(*,i,i-1)$ is a directed triangle containing $a$. 

To avoid $(i+m,*,i)$ being a second directed triangle containing $a$, we must have an arc from $i+m$ to $i$. This arc comes from $C_{i+k}^+$, in which it is followed by the arc $(i,i+m+1)$. Again to avoid another directed triangle containing $a$, we must have the arc $(*,i+m+1)$ and therefore the cycle $C_{i+1}^-$, which also contains the arc $(i+1,*)$. But since $(i,i+1)$ lies in $C_i^+$, we now have the directed triangle $(*,i,i+1)$ containing $a$. Thus it is not possible for $a$ to be in just one directed triangle.

Reversing the directions of each arc in the above argument shows that $(i,*)$ also cannot lie in exactly one directed triangle.
\end{proof}

We will often use the following fact in our arguments about Walecki tournaments. Details of the arcs in any cycle in a Walecki tournament also appear in~\cite{ales:1,ales:2,alspach:1}, and this can be deduced from those but is not hard to work out directly. In the next result and several others, it is important to note that since $n=2m$ is even, it makes sense to consider the parity of an element of $\mathbb Z_n$.

\begin{lem}\label{alternate}
Let $i \in \mathbb Z_n$ be a vertex in a Walecki tournament, and let $j$ be any other vertex such that $j$ has the same parity as $i$. Then of the arcs between the vertex $i$ and the vertices $j$ and $j+1$, one is oriented toward $i$ and the other away from $i$.
\end{lem}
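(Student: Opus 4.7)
The plan is to show that the two edges $\{i,j\}$ and $\{i,j+1\}$ both lie in one and the same undirected Hamilton cycle $C_t$ of the Walecki decomposition, and that within that cycle they are precisely the two edges of $C_t$ incident to $i$. Once this structural fact is in hand, the conclusion is immediate: regardless of whether $C_t$ was oriented as $C_t^+$ or $C_t^-$, exactly one of the two cycle-arcs at $i$ points toward $i$ and exactly one points away from it, so the same is true of the pair $\{i,j\}$, $\{i,j+1\}$.

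To identify the cycle, I choose $t$ so that $2t\equiv i+j\pmod{2m}$, which is solvable because $i$ and $j$ have the same parity in $\mathbb{Z}_n$. Setting $\ell := i-t\pmod{2m}$ and using the explicit listing $C_t=[*, t, t+1, t-1, t+2, t-2, \ldots, t+m, *]$, a short position count gives the neighbors of $i$ in $C_t$ explicitly: when $\ell\in\{1,\ldots,m-1\}$ the vertex $i=t+\ell$ sits at position $2\ell$, and its neighbors are $t-\ell+1$ and $t-\ell$; when $\ell\in\{m+1,\ldots,2m-1\}$, writing $\ell':=2m-\ell$, the vertex $i=t-\ell'$ sits at position $2\ell'+1$, and its neighbors are $t+\ell'$ and $t+\ell'+1$. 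In both cases, applying the relation $2t\equiv i+j$ collapses the pair of neighbors to $\{j,j+1\}$, as desired.

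The argument is largely bookkeeping; the one genuine thing to verify is that the ``boundary'' values $\ell\in\{0,m\}$, which would place $*$ as a neighbor of $i$ in $C_t$ and derail the proof, never arise. But $\ell=0$ forces $i=t=(i+j)/2$ and hence $i=j$, and $\ell=m$ forces $i\equiv t+m$, giving $j\equiv 2t-i\equiv t-m\equiv t+m\equiv i\pmod{2m}$; both contradict $j\ne i$. Thus $\ell$ always falls strictly between the two occurrences of $*$ in $C_t$, and the neighbor computation above completes the proof.
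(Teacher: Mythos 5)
Your proof is correct and takes the same route as the paper: the whole point is that the edges $\{i,j\}$ and $\{i,j+1\}$ are the two edges at $i$ in a single Walecki cycle $C_t$, so one of the corresponding arcs enters $i$ and the other leaves it. The paper simply asserts this membership in one sentence, whereas you supply the explicit computation (choosing $2t\equiv i+j$, locating $i$ in $C_t$, and ruling out the boundary positions adjacent to $*$); the extra bookkeeping is accurate and harmless.
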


\begin{proof}
The arc between $i$ and $j$ lies in the same directed Hamilton cycle $C_\ell^+$ or $C_\ell^-$ (for some $\ell$) as the arc between $i$ and $j+1$. This means that one of the arcs must be oriented toward $i$, and the other away from $i$.
\end{proof}

This allows us to deal with the case where the endpoints of $a$ differ by $1$ (i.e., are consecutive).

\begin{lem}\label{consec}
Let $a$ be an arc in a Walecki tournament whose endpoints are $i$ and $i+1$. If $n>4$, then $d(a)>1$.
\end{lem}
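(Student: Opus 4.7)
The plan is to argue by contradiction. Assume $d(a) = 1$; by \Cref{cor-triangle-types}, this immediately forces $b(a) = 0$, so it suffices to produce at least two distinct directed triangles containing $a$, which will give the desired contradiction.

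Let the endpoints of $a$ be $i$ and $i+1$. For any vertex $v \in \mathbb Z_n$ with $v \neq i$ and $v$ of the same parity as $i$, I would apply \Cref{alternate} with $v$ playing the role of the named vertex and $i$ playing the role of the ``other vertex'' of the same parity. This forces the arcs between $v$ and $\{i, i+1\}$ to lie in a common Hamilton cycle, so exactly one of them points toward $v$ and the other away. The two resulting possibilities are: either $(i,v)$ and $(v, i+1)$ are arcs, or $(v, i)$ and $(i+1, v)$ are arcs.

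Next I would run a short case check, treating the two orientations of $a$ separately. If $a = (i, i+1)$, the first possibility gives the transitive triangle $i \to v \to i+1$ with shortcut $a$, a bypass triangle from $a$'s perspective, while the second gives the directed triangle $v \to i \to i+1 \to v$; if instead $a = (i+1, i)$, the two possibilities swap roles. In every situation, the triangle on $\{v, i, i+1\}$ is either a bypass or a directed triangle from the perspective of $a$. Since $b(a) = 0$, every such $v$ must yield a directed triangle, and distinct choices of $v$ yield distinct triangles.

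Finally I would count: because $v$ and $i+1$ have opposite parities, $v \neq i+1$ automatically, so the number of valid $v$ is exactly $m-1$. The hypothesis $n > 4$ gives $m \geq 3$, hence $d(a) \geq m - 1 \geq 2$, contradicting $d(a) = 1$. I do not anticipate any real obstacle: the heavy lifting is already packaged into \Cref{alternate} and \Cref{cor-triangle-types}, and the remainder is a brief case check together with a parity count.
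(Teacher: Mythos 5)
Your proposal is correct and follows essentially the same route as the paper's own proof: apply \Cref{alternate} to each vertex $v \neq i$ of the same parity as $i$, observe that the resulting triangle on $\{v,i,i+1\}$ is either a bypass triangle (whence $d(a)>1$ by \Cref{cor-triangle-types}) or a directed triangle, and then count that $n>4$ supplies at least two such vertices $v$. The only cosmetic difference is that you frame it as a contradiction with $b(a)=0$ while the paper argues directly; the substance is identical.
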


\begin{proof}
Let $\ell$ be a vertex with the same parity as $i$, with $\ell \neq i$. By~\Cref{alternate}, exactly one of $i$ and $i+1$ is an outneighbour of $\ell$. If $b(a)\ge 1$ by~\Cref{cor-triangle-types} $d(a)>1$ completing the proof if $a$ is a bypass arc in the triangle induced by $\ell$, $i$, and $i+1$. The fact that there is a directed path of length $2$ via $\ell$ between $i$ and $i+1$ therefore forces this triangle to be a directed triangle. 

Since $n>4$ there are at least two vertices distinct from $i$ that have the same parity as $i$, so applying the above argument to each yields at least two directed triangles containing $a$, completing the proof.
\end{proof}

\section{Arcs whose endpoints have opposite parity}

We have already addressed the situation of consecutive vertices. In this section we consider every other situation in which an arc whose endpoints have opposite parity might lie in a unique directed triangle. We begin with a lemma that demonstrates a situation that
 often produces a second directed triangle if one exists.

\begin{lem}\label{consec-arcs}
Let $W_u$ be a Walecki tournament, and let $a$ be an arc of $W_u$ whose endpoints $i$ and $j$ have opposite parity and are not consecutive. Suppose there is some $\ell$ such that $\ell, \ell+1 \neq i,j$ and $\ell$ and $\ell+1$ are either both inneighbours or both outneighbours of $i$. Then either $d(a)>1$ or $\ell=2j-i-1$.
\end{lem}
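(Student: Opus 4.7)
The plan is to apply \Cref{alternate} twice and then carry out a case analysis. The first application, taking vertex $i$ with anchor $\ell$, would force the arcs between $i$ and $\ell, \ell+1$ to have opposite orientations — contradicting the hypothesis — whenever $\ell$ has the same parity as $i$. Hence $\ell$ has parity opposite to $i$, equivalently the same parity as $j$. The second application, now at vertex $j$ with anchor $\ell$, shows that exactly one of $\ell, \ell+1$ is an outneighbour of $j$ while the other is an inneighbour.

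This produces four sub-cases according to the direction of $a$ and whether $\ell, \ell+1$ are in- or out-neighbours of $i$. In two of these sub-cases, an appropriate choice of one of $\ell, \ell+1$ yields a length-two path from the tail of $a$ to its head, making $a$ a bypass arc; \Cref{cor-triangle-types} then gives $d(a)>1$ immediately. The other two sub-cases are interchanged by reversing every arc of $W_u$ (equivalently, replacing $u$ by its complement), an operation that preserves both the conclusion of the lemma and the exceptional value $\ell = 2j - i - 1$. I may therefore assume $a = (i,j)$ and $\ell, \ell+1 \to i$, in which case one of $\ell, \ell+1$ is already the third vertex of a directed triangle containing $a$, and I need to produce a second.

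To this end, the arc $a$ lies in a directed Hamilton cycle $C_r^{+}$ (WLOG) with $r = (i + j - 1)/2$. The zigzag structure of $C_r$ forces the two arcs of $C_r^+$ adjacent to $a$ to be $(j-1)\to i$ and $j\to(i-1)$, so $W_u$ contains both of these arcs. This supplies two further candidate directed triangles containing $a$: $(i,j,j-1)$, directed iff $u_{j-1}=0$, and $(i,j,i-1)$, directed iff $u_{i-1}=1$. If either candidate is directed and differs from the first triangle found, we are done; otherwise I would extend the analysis to candidates $k \in \{i \pm 2, j \pm 2\}$, computing each triangle's status from a small set of signature bits via the cycles $C_{r\pm 1}, C_{i-1}, C_{j-1}, C_i, C_j$.

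The main obstacle is organising this final casework so that the only arithmetic configuration in which \emph{every} candidate secondary triangle simultaneously fails to be directed is precisely $\ell = 2j - i - 1$. Since this exception is equivalent to $(\ell+1)+i = 2j$ — placing the arc $\{\ell+1, i\}$ inside the Hamilton cycle $C_j$ itself — I expect it to be the exact coincidence that allows the signature bits around $i$ and $j$ to simultaneously block all candidate secondary triangles; for any other $\ell$, at least one of these bits must force a second directed triangle into existence.
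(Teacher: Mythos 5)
Your opening reduction is the same as the paper's: \Cref{alternate} forces $\ell$ to have the parity of $j$, so exactly one of $\ell,\ell+1$ is an outneighbour of $j$, and the resulting length-two path between $i$ and $j$ either makes $a$ a bypass arc (done, by \Cref{cor-triangle-types}) or gives a directed triangle. But from that point your plan cannot succeed. The candidate second triangles you propose to examine --- those through $j-1$, $i-1$, $i\pm2$, $j\pm2$, governed by signature bits such as $u_{j-1}$ and $u_{i-1}$ --- involve neither $\ell$ nor any quantity derived from it. A triangle such as $(i,j,j-1)$ with arcs $(j-1,i)$ and $(i,j)$ simply fails to be directed (it is an ``in'' triangle from the perspective of $a$) whenever the relevant signature bit has the wrong value, and nothing in the hypothesis prevents all of your finitely many candidates from failing simultaneously while $\ell\neq 2j-i-1$. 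Since the lemma is genuinely invoked in situations where $d(a)=1$ and the conclusion $\ell=2j-i-1$ is the branch that actually holds (see the use in \Cref{odd-most-cases}), no argument that forgets $\ell$ after the first step can recover the stated dichotomy.

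The missing idea is a transfer of the hypothesis from $i$ to $j$ using the structure of the Walecki cycles: two arcs not involving $*$ lie in the same Hamilton cycle exactly when their endpoint sums agree, so the arc between $i$ and $\ell$ is parallel to the arc between $j$ and $\ell-j+i$, and the arc between $i$ and $\ell+1$ is parallel to the arc between $j$ and $\ell-j+i+1$. Hence, unless $\ell-j+i$ or $\ell-j+i+1$ equals $j$, the pair $\ell-j+i$, $\ell-j+i+1$ consists of two mutual inneighbours or two mutual outneighbours of $j$; applying \Cref{alternate} at $i$ to this new pair (which has the parity of $i$) yields a second length-two path between $i$ and $j$ through a vertex distinct from $\ell$ and $\ell+1$ (distinctness uses $i\neq j$ and that $i,j$ are not consecutive), hence a second directed triangle or a bypass, and in either case $d(a)>1$. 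Parity rules out $\ell-j+i=j$, so the only escape is $\ell-j+i+1=j$, i.e.\ $\ell=2j-i-1$. This is essentially the observation you make in your final sentence ($(\ell+1)+i=2j$ places the arc between $\ell+1$ and $i$ in $C_j$), but it needs to be the engine of the second half of the proof rather than an afterthought appended to local casework near $i$ and $j$.
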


\begin{proof}
Since $\ell$ and $\ell+1$ are either both inneighbours or both outneighbours of $i$, by~\Cref{alternate} $\ell$ must not have the same parity as $i$, so $\ell$ has the same parity as $j$ and exactly one of $\ell$ and $\ell+1$ is an outneighbour of $j$. Thus there is a directed path of length $2$ between $i$ and $j$ via either $\ell$ or $\ell+1$. Putting this together with $a$ produces either a triangle in which $a$ is a bypass arc, in which case $b(a)\ge 1$ and by~\Cref{cor-triangle-types} $d(a)>1$, or a directed triangle. Thus, if we have not yet reached our desired conclusion, then either $\ell$ or $\ell+1$ together with $a$ induce a directed triangle.

Notice that the arc between $i$ and $\ell$ is in the same Hamilton cycle as the arc between $j$ and $\ell-j+i$; likewise, the arc between $\ell+1$ and $i$ is in the same Hamilton cycle as the arc between $j$ and $\ell-j+i+1$. So unless $\ell-j+i=j$ or $\ell-j+i+1=j$, we have $\ell-j+i$ and $\ell-j+i+1$ are either both inneighbours or both outneighbours of $j$. Since $i$, $j$, and $\ell$ are distinct, we cannot have $\ell-j+i=i$. Since $\ell$ has the same parity as $j$, $\ell-j+i$ has the same parity as $i$, and in particular $\ell-j+i+1\neq i$. Furthermore, by~\Cref{alternate}, exactly one of $\ell-j+i$ and $\ell-j+i+1$ is an outneighbour of $i$. Thus we have a directed path of length two between $i$ and $j$ via either $\ell-j+i$ or $\ell-j+i+1$. We conclude that either $\ell-j+i$ or $\ell-j+i+1$ together with $a$ induce a triangle in which either $a$ is a bypass arc (our desired conclusion), or the triangle is directed. 

Since $i$ and $j$ are distinct, we cannot have $\ell-j+i=\ell$, or $\ell-j+i+1=\ell+1$. Since $i$ and $j$ are not consecutive, we cannot have $\ell-j+i=\ell+1$, or $\ell-j+i+1=\ell$. Thus, the two directed triangles we have found are distinct, and we conclude $d(a)>1$ as desired.

The only remaining possibility is that the ``unless" condition we assumed was false to find the second directed triangle, is in fact true: that is, $\ell-j+i=j$ or $\ell-j+i+1=j$. Since $\ell$ and $i$ have opposite parity, we cannot have $\ell-j+i=j$, so we must have $\ell-j+i+1=j$, and therefore $\ell=2j-i-1$, completing the proof.
\end{proof}

We now make use of the preceding lemma to deal with many possible choices for the third vertex of a unique directed triangle.

\begin{lem}\label{odd-most-cases}
Let $W_u$ be a Walecki tournament, and let $a$ be an arc of $W_u$ whose endpoints $i$ and $j$ have opposite parity and are not consecutive. Suppose $i$, $j$, and $\ell$ induce a directed triangle in $W_u$. Then one of the following holds:
\begin{itemize}
\item $d(a) >1$;
\item $\ell \in \{i-1,i+1,j-1,j+1\}$;
\item $2m+1 \equiv 0 \pmod{3}$, and we can choose $i', j'$ such that $\{i',j'\}=\{i,j\}$ and $j'=i'+(2m+1)/3$, and $\ell=2j'-i'-1$; or
\item $4m+1 \equiv 0 \pmod{3}$, and we can choose $i', j'$ such that $\{i',j'\}=\{i,j\}$ and $j'=i'+(4m+1)/3$, and $\ell=2j'-i'-1$.
\end{itemize}
\end{lem}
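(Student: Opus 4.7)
The plan is to examine the two neighbours $\ell-1$ and $\ell+1$ of $\ell$ in $\mathbb{Z}_n$, pinning down several arcs near $\ell$ via \Cref{alternate} and then invoking \Cref{consec-arcs} twice to extract arithmetic information. Because the statement is symmetric in $i$ and $j$, we may assume $a=(i,j)$, so the directed triangle on $\{i,j,\ell\}$ has arcs $(i,j),\,(j,\ell),\,(\ell,i)$. We then split according to the parity of $\ell$.

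\textbf{Case A: $\ell$ has the same parity as $i$.} Applying \Cref{alternate} at $i$ to the pair $\{\ell,\ell+1\}$ forces the arc $(i,\ell+1)$, and applying it at $j$ to $\{\ell-1,\ell\}$ forces $(\ell-1,j)$. The triangle $\{i,j,\ell+1\}$ then has its first two arcs pinned as $(i,j),(i,\ell+1)$, so its remaining arc is either $(\ell+1,j)$, producing a bypass triangle for $a$ and hence $d(a)>1$ by \Cref{cor-triangle-types}, or $(j,\ell+1)$, making $\ell,\ell+1$ two consecutive out-neighbours of $j$; \Cref{consec-arcs} with the roles of $i$ and $j$ swapped, taken at $\ell'=\ell$, then yields $d(a)>1$ or $\ell=2i-j-1$ (the non-coincidence condition reduces to $\ell\neq j-1$, since parity already excludes $\ell=i-1$). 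An entirely analogous analysis of $\{i,j,\ell-1\}$, concluding with \Cref{consec-arcs} at $\ell'=\ell-1$, gives $d(a)>1$ or $\ell=2j-i$, valid provided $\ell\neq j+1$. Hence, if $\ell\notin\{j-1,j+1\}$, the two conclusions combine to $2i-j-1=\ell=2j-i$, i.e.\ $3(i-j)\equiv1\pmod{n}$.

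\textbf{Case B: $\ell$ has the same parity as $j$.} This case is symmetric to Case A: \Cref{alternate} now pins down $(i,\ell-1)$ and $(\ell+1,j)$, and the same two-triangle analysis together with \Cref{consec-arcs} (swapped, at $\ell'=\ell-1$, and direct, at $\ell'=\ell$) yields $\ell=2i-j$ and $\ell=2j-i-1$ provided $\ell\notin\{i-1,i+1\}$, hence $3(j-i)\equiv1\pmod{n}$.

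Since $n=2m$, the congruence $3x\equiv1\pmod{2m}$ is solvable iff $3\nmid m$, and its solution is $x=(2m+1)/3$ when $3\mid 2m+1$ and $x=(4m+1)/3$ when $3\mid 4m+1$. Setting $(i',j')=(j,i)$ in Case A and $(i',j')=(i,j)$ in Case B makes $j'-i'$ equal to this inverse and $\ell=2j'-i'-1$, matching the last two bullets of the statement. The main obstacle is the careful bookkeeping of the edge cases where $\ell\pm1\in\{i,j\}$: these are exactly $\ell\in\{i\pm1,j\pm1\}$, where one of the two auxiliary triangles is degenerate and the non-coincidence hypothesis of \Cref{consec-arcs} fails, which is precisely why these four positions must be singled out as a separate alternative in the statement.
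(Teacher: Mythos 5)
Your proof is correct and follows essentially the same route as the paper's: pin the arcs between $\ell\pm1$ and the endpoints via \Cref{alternate}, observe that each auxiliary triangle on $\{i,j,\ell\pm1\}$ is either a bypass triangle (giving $d(a)>1$) or produces a consecutive pair of mutual in/out-neighbours, apply \Cref{consec-arcs} twice, and solve $3(j'-i')\equiv 1\pmod{2m}$. The paper folds your two parity cases into a single without-loss-of-generality relabelling of $i$ and $j$ (assuming $\ell$ shares the parity of $j$), but the content is identical, including your observation that $\ell\in\{i-1,i+1,j-1,j+1\}$ is exactly the set of positions where the non-coincidence hypothesis of \Cref{consec-arcs} fails.
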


\begin{proof}
We assume that $d(a)=1$ and $\ell \notin \{i-1,i+1,j-1,j+1\}$, and deduce that one of the other conclusions must hold.
Note that $\ell$ has the same parity as exactly one of $i,j$. Since there is no distinction between $i$ and $j$ at this point, we may assume without loss of generality that $j$ and $\ell$ have the same parity; therefore, $i$ and $\ell-1$ have the same parity, and by hypothesis, $\ell-1 \neq i$. However, since this choice for $\ell$ may have caused us to interchange the labels of $i$ and $j$, any conclusions that are not equivalent in $i$ and $j$ need to be written in terms of some $i'$ and $j'$ with $\{i',j'\}=\{i,j\}$ (as we have done).

Since $\ell \notin \{i-1,i+1,j-1,j+1\}$, the vertices $i,j,\ell-1, \ell,$ and $\ell+1$ are all distinct. Since $d(a)=1$, neither $i$, $j$ and $\ell-1$, nor $i$, $j$ and $\ell+1$ can induce a directed triangle; also by~\Cref{cor-triangle-types}, neither can induce a triangle in which $a$ is a bypass arc. So from the perspective of $a$, each of these induced triangles must be either ``in" or ``out''.

By~\Cref{alternate}, exactly one of $\ell$ and $\ell+1$ is an outneighbour of $j$, and exactly one of $\ell-1$ and $\ell$ is an outneighbour of $i$. Since $i$, $j$, and $\ell$ induce a directed triangle, exactly one of $i$ and $j$ is an inneighbour of $\ell$. Putting all of this together with the fact that the induced triangles involving $a$ and $\ell+1$ and $a$ and $\ell-1$ are either ``in" or ``out", we deduce that $\ell$ and $\ell-1$ are either both inneighbours of $j$ or both outneighbours of $j$, and that $\ell$ and $\ell+1$ are either both inneighbours of $i$ or both outneighbours of $i$. Now we apply~\Cref{consec-arcs} to each of these.

Applying~\Cref{consec-arcs} to the arcs between $j$ and both $\ell-1$ and $\ell$, since $d(a)=1$ we conclude that $\ell-1=2i-j-1$, so $\ell=2i-j$. Applying~\Cref{consec-arcs} to the arcs between $i$ and both $\ell$ and $\ell+1$, we conclude that $\ell=2j-i-1$. Combining these yields $3j=3i+1$.
These equalities are actually equivalencies modulo $n=2m$, and clearly force $2m$ not to be $0$ modulo $3$. If $2m$ is $2$ modulo $3$, then $2m+1\equiv 0\pmod{3}$, and $3j=3i+2m+1$ which implies $j=i+(2m+1)/3$, and $\ell=2j-i-1$. Recalling that we may have to reverse the roles of $i$ and $j$, this is the first of our remaining two conclusions. If $2m$ is $1$ modulo $3$ then $4m$ is $2$ modulo $3$ so $4m+1 \equiv 0 \pmod{3}$ and similar calculations yield the final conclusion.
\end{proof}

The preceding lemma left a few cases remaining to be dealt with, one of which is the possibility that $\ell$ is one of $i-1$, $i+1$, $j-1$, or $j+1$. We address this next.

\begin{lem}\label{j-ell-consec}
Let $W_u$ be a Walecki tournament, and let $a$ be an arc of $W_u$ whose endpoints $i$ and $j$ have opposite parity and are not consecutive. Let $\ell \in \{j-1,j+1\}$ and suppose that $i$, $j$, and $\ell$ induce a directed triangle in $W_u$. Then $d(a)>1$.
\end{lem}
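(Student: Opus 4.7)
The plan is to set $a=(i,j)$ (using that $d(a)=d(\bar a)$) and handle the two sub-cases $\ell=j+1$ and $\ell=j-1$ in turn. My strategy will be to translate the hypothesis that the consecutive arc $\{j,\ell\}$ is in $W_u$ into an orientation for the cycle containing it, and then exploit a key structural observation about shared cycles. Setting $k_a=(i+j-1)/2\bmod m$ so that $\{i,j\}\in C_{k_a}$, the crucial facts I will use are that the edge $\{i,j-1\}$ also lies in $C_{k_a}$, the edges $\{i,j+1\}$ and $\{i,j+2\}$ both lie in $C_{k_a+1}$, and $\{i,j-2\}$ lies in $C_{k_a-1}$. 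This means that only a handful of signature entries control all the arcs near $i$ that I will need.

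For $\ell=j+1$: the arc $(j,j+1)\in W_u$ gives $C_j^+\subseteq W_u$, and the shared-cycle observation immediately yields $(j-1,i)\in W_u$. I would split on $u_{j-1}$: if $u_{j-1}=0$, then $(j,j-1)\in W_u$ closes the directed triangle $i\to j\to j-1\to i$; otherwise $C_{j-1}^+\subseteq W_u$ supplies $(j,j-2)\in W_u$, and I split on $u_{k_a-1}$. The case $u_{k_a-1}=0$ gives $(j-2,i)\in W_u$, which pairs with $(j-1,i)$ to make $j-2,j-1$ both in-neighbours of $i$; then Lemma~\ref{consec-arcs} applies, since its exceptional $\ell'=2j-i-1=j-2$ would force $i=j+1$ (excluded). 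In the case $u_{k_a-1}=1$, I turn to the triangle $\{i,j,j+2\}$: here $(i,j+2)\in W_u$ automatically, because $(j+1,i)\in W_u$ forces $u_{k_a+1}=1$, which also orients $\{i,j+2\}$. If $u_{j+1}=1$ then $(j+2,j)\in W_u$, so $\{i,j,j+2\}$ is a bypass triangle from $a$'s perspective via the path $i\to j+2\to j$, and Corollary~\ref{cor-triangle-types} delivers $d(a)>1$; if instead $u_{j+1}=0$, then $j+1,j+2$ are both out-neighbours of $j$, and Lemma~\ref{consec-arcs} from $j$'s perspective again gives $d(a)>1$ (the exception $i=j+1$ is excluded).

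For $\ell=j-1$: the argument is shorter. Now $u_{j-1}=0$ directly yields $(j-2,j)\in W_u$ from $C_{j-1}^-$. The arc between $i$ and $j-2$ is either $(i,j-2)$ (making $\{i,j,j-2\}$ a bypass triangle through $i\to j-2\to j$) or $(j-2,i)$ (in which case $(j-2,i)$ and $(j-1,i)$ again make $j-2,j-1$ consecutive in-neighbours of $i$ for Lemma~\ref{consec-arcs}). The hardest part of writing this proof cleanly will be the bookkeeping for cycle orientations, deciding for each candidate arc which of $C_r^+$ or $C_r^-$ it belongs to; but once the shared-cycle observations above are in hand, each branch of the case analysis terminates quickly in one of three patterns (a second directed triangle, a bypass triangle, or a same-side consecutive pair for Lemma~\ref{consec-arcs}), with the non-consecutivity of $i$ and $j$ ruling out every exceptional case.
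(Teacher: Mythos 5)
Your proof is correct and follows essentially the same route as the paper's: in each case the decisive step is to pair $\ell$ with the adjacent vertex on the appropriate side ($j+2$ for $\ell=j+1$, $j-2$ for $\ell=j-1$) via a shared Hamilton cycle, and then conclude either through a bypass triangle and \Cref{cor-triangle-types} or through \Cref{consec-arcs}, whose exceptional value is ruled out by the non-consecutivity and opposite parity of $i$ and $j$. The preliminary splits on $u_{j-1}$ and $u_{k_a-1}$ in the $\ell=j+1$ case are harmless but redundant (and the assignment of arc directions to specific signature values there would need care), since your final branch --- the $j+2$ analysis, which is exactly the paper's argument for that case --- applies unconditionally.
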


\begin{proof}
Suppose first that $\ell=j+1$. Since $\ell$ and $i$ have the same parity, by~\Cref{alternate} exactly one of $\ell$ and $\ell+1$ is an outneighbour of $i$ (since $i$ and $j$ are not consecutive and due to parity, neither of these vertices can be $i$). If exactly one of $\ell$ and $\ell+1$ is an outneighbour of $j$, then since $i$, $j$, and $\ell$ induce a directed triangle, there are directed paths of length $2$ in opposite directions between $i$ and $j$ via $\ell$ and via $\ell+1$. But this implies that $i$, $j$, and $\ell+1$ induce a triangle in which $a$ is a bypass arc, so by~\Cref{cor-triangle-types}, $d(a)>1$ and we are done.

We may therefore assume that $\ell$ and $\ell+1$ are either both inneighbours of $j$, or both outneighbours of $j$. Now by~\Cref{consec-arcs}, either $d(a)>1$ and we are done, or $\ell=2j-i-1$. Since $\ell=j+1$, this implies $2j-i-1=j+1$, so $j=i+2$, contradicting the distinct parities of $i$ and $j$.

Now suppose that $\ell=j-1$. Now $j$ and $j-2=\ell-1$ have the same parity, so by~\Cref{alternate} exactly one of $\ell-1$ and $\ell$ is an outneighbour of $j$. If exactly one of $\ell$ and $\ell-1$ is an outneighbour of $i$, then since $i$, $j$, and $\ell$ induce a directed triangle, there are directed paths of length $2$ in opposite directions between $i$ and $j$ via $\ell$ and via $\ell-1$. But this implies that $i$, $j$, and $\ell-1$ induce a triangle in which $a$ is a bypass arc, so by~\Cref{cor-triangle-types}, $d(a)>1$ and we are done.

We may therefore assume that $\ell$ and $\ell-1$ are either both inneighbours of $i$, or both outneighbours of $i$. Now by~\Cref{consec-arcs}, either $d(a)=1$ and we are done, or $\ell-1=2j-i-1$, meaning $\ell=2j-i$. Since $\ell=j-1$, this implies $2j-i=j-1$, so $j=i-1$, but this contradicts our hypothesis that $i$ and $j$ are not consecutive.
\end{proof}

Our next two results deal with the other cases that were not addressed in~\Cref{odd-most-cases}.

\begin{lem}\label{thirds-1}
Let $W_u$ be a Walecki tournament. Suppose that $2m+1 \equiv 0 \pmod 3$, $(2m+1)/3$ is odd, and $i$, $j$, and $\ell$ are such that $j=i+(2m+1)/3$ and $\ell=2j-i-1$. Let $a$ be the arc in $W_u$ whose endpoints are $i$ and $j$. If $i$, $j$, and $\ell$ induce a directed triangle, then $d(a)>1$.
\end{lem}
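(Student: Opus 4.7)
The plan is to produce a second directed triangle containing $a$ by showing that the vertex $j-1$ must be its third vertex. Let $t = (2m+1)/3$, which by hypothesis is odd, and set $s = (t-1)/2$, so $j = i + t$, $\ell = i + 2t - 1$, and $3t - 1 = 2m$.

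The first observation I would establish is that the arcs $\{i, j\}$ and $\{i, j-1\}$ share the vertex $i$, and since $j, j-1$ differ by $1$, they must be the two neighbours of $i$ in a common Hamilton cycle of the Walecki decomposition. Computing vertex sums places this cycle at $C_{(i+s) \bmod m}$, and tracing the positions of $i$, $j$, and $j-1$ in $C_{i+s}^+$ shows that these two arcs appear there as the consecutive arcs $(j-1) \to i \to j$. Hence in $W_u$, either both $(i, j)$ and $(j-1, i)$ are arcs, or both $(j, i)$ and $(i, j-1)$ are. The second observation I would establish is that the hypothesis $\ell = 2j - i - 1$ is equivalent to the identity $i + \ell = j + (j-1)$, which places $\{i, \ell\}$ and $\{j, j-1\}$ in the common Hamilton cycle $C_{(i+t-1) \bmod m}$ as odd-sum edges. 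A further position trace in $C_{i+t-1}^+$ shows these two arcs appear as $(i, \ell)$ and $(j-1, j)$, so either both $(i, \ell)$ and $(j-1, j)$ are arcs of $W_u$, or both $(\ell, i)$ and $(j, j-1)$ are.

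With these two linked-orientation statements in hand, the conclusion is immediate. Assume without loss of generality that $a = (i, j)$ and the given directed triangle is $i \to j \to \ell \to i$ (the case $a = (j, i)$ is symmetric). Because $(i, j)$ is an arc, the first observation forces $(j-1, i)$ to be an arc of $W_u$; because $(\ell, i)$ is an arc, the second observation forces $(j, j-1)$ to be an arc. Then $(i, j)$, $(j, j-1)$, $(j-1, i)$ form a directed triangle with third vertex $j - 1$. This vertex differs from $\ell = i + 2t - 1$ by $t \not\equiv 0 \pmod{2m}$, from $i$ (since $t \neq 1$ for the relevant values $m \geq 4$), and obviously from $j$; so this is a genuinely new directed triangle containing $a$, giving $d(a) > 1$.

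The main technical work lies in verifying the two linked-orientation claims, which requires the explicit positional description of $C_k^+$ from the Walecki decomposition. The algebraic crux, however, is simply the identity $i + \ell = j + (j-1)$ extracted from the hypothesis; this is what forces $\{i, \ell\}$ and $\{j, j-1\}$ into the same Hamilton cycle and thereby transfers the orientation of the given triangle onto the triangle with vertices $\{i, j, j-1\}$.
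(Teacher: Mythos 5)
Your proof is correct and takes essentially the same route as the paper: both arguments produce the second directed triangle on $\{i,j,j-1\}$, obtaining the arc between $i$ and $j-1$ from the fact that $j-1$ and $j$ are the two neighbours of $i$ in a common Hamilton cycle (the paper's Lemma~\ref{alternate}), and obtaining the arc between $j$ and $j-1$ from the key identity $i+\ell=2j-1=j+(j-1)$, which places $\{i,\ell\}$ and $\{j,j-1\}$ as co-oriented parallel arcs of the same Hamilton cycle. Your verification that $j-1\notin\{i,j,\ell\}$ is a small extra check the paper leaves implicit, but the substance is identical.
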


\begin{proof}
For concreteness, let us assume that there are arcs from $i$ to $j$ to $\ell$ to $i$. Note that the parity of $\ell$ is different from that of $i$, and therefore the same as that of $j$. By~\Cref{alternate}, there is an arc from $j-1$ to $i$. 

Note that since $\ell=2j-i-1$ the arc from $\ell$ to $i$ is in the same Hamilton cycle as the arc between $j=\ell-j+i+1$ and $i+j-i-1=j-1$. Accordingly, this arc must be directed from $j$ to $j-1$. Now we have a directed triangle from $j$ to $j-1$ to $i$ to $j$, so $d(a)>1$. Reversing all of the arcs gives the same conclusion.
\end{proof}

\begin{lem}\label{thirds-2}
Let $W_u$ be a Walecki tournament. Suppose that $4m+1 \equiv 0 \pmod 3$, $(4m+1)/3$ is odd, and $i$, $j$, and $\ell$ are such that $j=i+(4m+1)/3$ and $\ell=2j-i-1$. Let $a$ be the arc in $W_u$ whose endpoints are $i$ and $j$. If $i$, $j$, and $\ell$ induce a directed triangle, then $d(a)>1$.
\end{lem}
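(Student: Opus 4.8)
```latex
The plan is to imitate the structure of the proof of~\Cref{thirds-1} as closely as possible, since the two statements are formally parallel: the only change is that $j-i$ equals $(4m+1)/3$ rather than $(2m+1)/3$. As before, I would fix an orientation for concreteness, say with arcs from $i$ to $j$, from $j$ to $\ell$, and from $\ell$ to $i$ (the reverse orientation will follow by reversing all arcs). The first routine observation is a parity check: since $i$ and $j$ have opposite parity and $\ell=2j-i-1$, the vertex $\ell$ has parity opposite to $i$ and hence the same parity as $j$. This lets me invoke~\Cref{alternate} freely on pairs of consecutive vertices.

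The heart of the argument is to locate a second directed triangle through $a$ using the ``same Hamilton cycle'' trick that drives~\Cref{consec-arcs} and~\Cref{thirds-1}: the arc between two vertices $p,q$ lies in the same directed Hamilton cycle as the arc between $p+t$ and $q+t$ obtained by applying a power of $\rho$, so orientations are forced to transfer consistently. Concretely, I would translate the three arcs of the known directed triangle by $\rho$ to produce candidate arcs among the vertices $i\pm 1$, $j\pm 1$, $\ell\pm 1$, and show that one of the translated triangles is again directed and contains $a$. Since $\ell=2j-i-1$, the arc from $\ell$ to $i$ sits in the same Hamilton cycle as the arc between $j$ and $j-1$ (exactly the computation $\ell-j+i+1=j$ and $i+j-i-1=j-1$ used in~\Cref{thirds-1}), which forces an arc from $j$ to $j-1$; combined with an arc from $j-1$ to $i$ supplied by~\Cref{alternate}, this should yield the directed triangle on $\{i,j,j-1\}$ and hence $d(a)>1$.

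The one genuine difference from~\Cref{thirds-1}, and the step I expect to require the most care, is the congruence bookkeeping. Because the gap is now $(4m+1)/3$ rather than $(2m+1)/3$, I must re-verify that the relevant translated vertices are genuinely distinct from $i$, $j$, and from each other modulo $n=2m$, so that the second triangle I construct is truly a different triangle and not a degenerate coincidence with the first. The hypothesis that $(4m+1)/3$ is odd is precisely what guarantees the correct parity of $j-i$, keeping $\ell$, $j-1$, and the other auxiliary vertices in the intended parity classes; I would use this, together with the assumption $n>4$ implicit in having non-consecutive opposite-parity endpoints, to rule out the small cases where the two triangles could collide. Once distinctness is confirmed, the appeal to~\Cref{cor-triangle-types} (either directly exhibiting a bypass arc, or exhibiting a second directed triangle) closes the argument, and the reverse orientation is handled symmetrically by reversing every arc as in the previous lemma.
```
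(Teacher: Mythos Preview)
Your proposal has a genuine gap at the central step. You claim that, since $\ell=2j-i-1$, the arc $\ell\to i$ lies in the same Hamilton cycle as the edge $\{j,j-1\}$ and therefore forces an arc $j\to j-1$, exactly as in \Cref{thirds-1}. The first part is correct: both edges have vertex sum $2j-1$ and hence lie in $C_{j-1}$. But the forced \emph{direction} reverses. In \Cref{thirds-1} we have $j-i=(2m+1)/3<m$, so within $C_{j-1}$ the vertex $i$ sits at position $(j-1)-r$ and $\ell$ at position $(j-1)+(r+1)$ for $r=(2m-2)/3$; then $\ell\to i$ selects $C_{j-1}^-$, giving $j\to j-1$. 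In the present lemma $j-i=(4m+1)/3>m$, and after reduction modulo $n$ the roles swap: now $\ell=(j-1)-r$ and $i=(j-1)+(r+1)$ with $r=(2m-1)/3$, so $\ell\to i$ selects $C_{j-1}^+$, which yields $j-1\to j$. Combined with $j-1\to i$ from \Cref{alternate}, the triangle on $\{i,j,j-1\}$ is an ``in'' triangle from the perspective of $a$, not a directed one, and your argument stops. (A quick sanity check with $m=5$, $i=0$, $j=7$, $\ell=3$ confirms this.)

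This reversal is precisely why the paper's proof of \Cref{thirds-2} is not a one-line copy of \Cref{thirds-1}: it instead works with $\ell+1$, $\ell+2$, $i+1$, and $j+1$, tracking two further parallel-arc deductions and a case split before finally exhibiting a bypass triangle through $j+1$. Your congruence bookkeeping remarks are on the right track, but the issue is not merely distinctness of auxiliary vertices; it is that the orientation transfer you rely on points the wrong way when $j-i$ exceeds $m$.
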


\begin{proof}
For concreteness, let us assume that there are arcs from $i$ to $j$ to $\ell$ to $i$. Note that the parity of $\ell$ is different from that of $i$, and therefore the same as that of $j$. By~\Cref{alternate}, there is an arc from $\ell+1$ to $j$. If there is an arc from $i$ to $\ell+1$ then $a$ is a bypass arc in the triangle induced by $a$ and $\ell+1$, so $b(a) \ge 1$ and by~\Cref{cor-triangle-types}, $d(a)>1$, completing the proof. So we may assume that there is an arc from $\ell+1$ to $i$.

By~\Cref{alternate}, there is also an arc from $\ell$ to $j+1$; that is, from $i+(2m+2)/3-1$ to $i+(4m+1)/3+1$. In the same Hamilton cycle and parallel to this arc, there is an arc from $(i+(2m+2)/3-1)-((2m+2)/3-2)$ to $(i+(4m+1)/3+1)+((2m+2)/3-2)$; that is, from $i+1$ to $i$. If there were also an arc from $j$ to $i+1$ then $a$ together with $i+1$ would induce a second directed triangle containing $a$, completing the proof. So we may assume that there is an arc from $i+1$ to $j$. There must then also be the parallel arc from $i$ to $j+1$ in the same Hamilton cycle.

Recall that there is an arc from $\ell+1$ to $i$, and therefore by~\Cref{alternate} there is an arc from $i$ to $\ell+2$; that is, from $j-(4m+1)/3$ to $j+(4m+1)/3+1$. In the same Hamilton cycle, there is a parallel arc from $(j-(4m+1)/3)-((2m+2)/3-2)$ to $(j+(4m+1)/3+1)+((2m+2)/3-2)$; that is, from $j+1$ to $j$. But now $a$ is a bypass arc in the induced triangle on $a$ and $j+1$, so $b(a) \ge 1$ and by~\Cref{cor-triangle-types}, $d(a)>1$. This completes the proof.
\end{proof}

To this point, we have shown that if an arc whose endpoints have opposite parity lies in a unique directed triangle, the third vertex of that triangle cannot be anything but $*$. In the final result of this section, we show that the only situation in which an arc whose endpoints have opposite parity can lie in a unique directed triangle is if $m$ is odd and our tournament is isomorphic to $W_{1_m}$. In $W_{1_m}$ the arc between $0$ and $m$ does lie in a unique directed triangle whose third vertex is $*$, but is the only arc whose endpoints have opposite parity that lies in a unique directed triangle. (You may recall that when $m$ is odd the automorphism group of $W_{1_m}$ is the cyclic group generated by $\sigma$; while this does map the arc between $0$ and $m$ to other arcs, the endpoints of any of these arcs have the same parity.)

\begin{lem}\label{parallel}
Let $W_u$ be a Walecki tournament, and let $a$ be an arc of $W_u$ whose endpoints $i$ and $j$ have opposite parity. If $i$, $j$ and $*$ induce a directed triangle in $W_u$, then either $d(a)>1$, or $j=i+m$ and $u\in \{1_mR_i,1_mR_j\}$.
\end{lem}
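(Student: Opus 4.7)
My plan is to assume, without loss of generality by choosing which direction of $a$ to work with, that the directed triangle consists of the arcs $(*,i)$, $(i,j)$, $(j,*)$; the opposite orientation will yield the conclusion $u=1_mR_j$ by the symmetric argument. Because every arc of a Walecki tournament lies in a directed Hamilton cycle contained entirely in $W_u$, the arcs $(*,i)$ and $(j,*)$ place two complete Hamilton cycles into $W_u$, and (unless $i$ and $j$ are consecutive, which is handled by \Cref{consec}) the arc $(i,j)$ places a third into $W_u$. Reading off these three cycles supplies many arcs at once: from the first I extract $(i,i+1)$ and $(i+1,i-1)$; from the second I extract $(j+1,j)$; from the third I extract the arcs $(j-1,i)$ and $(j,i-1)$ that neighbour $(i,j)$ in its cycle.

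The heart of the proof is a propagation argument driven by $d(a)=1$. The triangle $\{i,j,i+1\}$ cannot be directed (else $d(a)\ge 2$), cannot be bypass (else $b(a)\ge 1$, contradicting \Cref{cor-triangle-types}), and cannot be ``in'' from $a$'s perspective (since $(i,i+1)$ points away from $i$); so it must be ``out'', giving $(j,i+1)\in W_u$. Symmetric triangle arguments at $j+1$, $j-1$, and $i-1$ (using the arcs already derived from the three forced cycles) give $(j+1,i)$, $(j-1,j)$, and $(i,i-1)$ in $W_u$. In general, for any $\ell\notin\{*,i,j\}$, the same reasoning produces the equivalences $(\ell,i)\in W_u\iff(\ell,j)\in W_u$ and $(i,\ell)\in W_u\iff(j,\ell)\in W_u$. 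Combining these equivalences with iterated applications of \Cref{alternate} alternately at $i$ and $j$, starting from the base case $(j+1,i)$, I will propagate a single uniform rule over all of $\mathbb{Z}_n$: every vertex with the same parity as $j$ is an outneighbour of both $i$ and $j$, and every vertex with the same parity as $i$ other than $i$ itself is an inneighbour of both $i$ and $j$.

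Two conclusions follow. First, for each opposite-parity $v$, the arc $(i,v)$ lies in the unique Hamilton cycle indexed by $k=(i+v-1)/2\bmod m$ in a specific direction, and as $v$ ranges over the $m$ opposite-parity vertices, $k$ ranges through every residue modulo $m$. A direct bookkeeping of whether $(i,v)$ sits in $C_k^+$ or $C_k^-$ yields the signature $u=1_mR_i$. Second, the same rule forces the inneighbour set of $j$ in $W_u$ to consist of $i$ together with the vertices of the same parity as $i$; a short calculation using the explicit structure of $W_{1_mR_i}$ shows that this happens precisely when $j=i+m$. The constraint $j=i+m$ combined with the opposite-parity hypothesis forces $m$ odd; when $m$ is even, $j=i+m$ has the same parity as $i$, so the alternative conclusion is vacuous and the derived pattern is inconsistent with the hypotheses, forcing $d(a)>1$.

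The principal technical obstacle will be ensuring that the propagation argument is airtight: verifying that the alternating chain from \Cref{alternate} closes up consistently as it circles $\mathbb{Z}_n$, in particular when it passes through the positions of $i$ and $j$ themselves (at those positions the ``expected'' conclusion is precisely the already-given arc $a$, and this compatibility must be checked rather than assumed). A secondary care point is the final bookkeeping that isolates $j=i+m$ as the only value of $j$ compatible with the forced inneighbour set of $j$ in $W_{1_mR_i}$; this can be handled by computing the predecessors of $j$ in each $C_k^+$ explicitly and observing that they are all of the same parity as $i$ only in the case $j=i+m$.
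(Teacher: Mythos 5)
Your plan is essentially correct, and after a shared opening it finishes by a genuinely different route than the paper. Both arguments begin the same way: if $d(a)=1$ then $b(a)=0$ by \Cref{cor-triangle-types}, so every $\ell\neq i,j,*$ is a mutual in-neighbour or a mutual out-neighbour of $i$ and $j$, and \Cref{alternate} makes this classification alternate with parity along each of the two segments of $\mathbb Z_n$ between $i$ and $j$. The paper then finishes with a single observation you do not use: the arc between $i-1$ and $i$ is parallel to, and in the same Hamilton cycle as, the arc between $j$ and a vertex of the opposite parity to $i-1$, so the parity rule classifies the two endpoints oppositely while parallelism classifies them identically --- a contradiction unless that second edge degenerates into the exceptional $\{\ell,\ell+1\}$-type edge of its cycle, which happens exactly when $j=i+m$; only then does the paper read off $u$. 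You instead harvest extra base cases from the three Hamilton cycles through the arcs of the $*$-triangle, establish the uniform in/out rule on all of $\mathbb Z_n$, note that the out-neighbourhood of $i$ alone already determines every $u_\ell$ (each cycle $C_\ell$ has exactly one of its two arcs at $i$ pointing out of $i$, and which one it is fixes the orientation of $C_\ell$), so $u=1_mR_i$, and only afterwards force $j=i+m$ by checking the required in-neighbourhood of $j$ against the actual structure of $W_{1_mR_i}$. I verified that final check: taking $i=0$, the odd-parity in-neighbours of an odd vertex $v$ in $W_{1_m}$ come exactly from the indices $\ell\in\{0,\dots,m-1\}$ with $\ell-v\bmod n\in\{0,\dots,m-1\}$, and this index set is empty precisely when $v=m$. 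Your route costs a little more explicit computation inside $W_{1_m}$, but it buys a clean structural fact the paper leaves implicit (a single vertex's neighbourhood pattern determines the whole signature), and it makes the case analysis at the end more mechanical.

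Two cautions. First, the ``closing up'' consistency you identify as the principal obstacle is in fact automatic: the base cases at the two ends of each segment ($i+1$ and $j-1$ mutually out and in, $j+1$ and $i-1$ mutually in and out, respectively) agree with the same global parity rule for \emph{every} non-consecutive $j$, so that check imposes no constraint on $j$ whatsoever; the constraint $j=i+m$ lives entirely in the final comparison with $W_{1_mR_i}$ (equivalently, in the paper's parallel-arc observation), so do not expect the propagation itself to rule anything out. Second, deferring the consecutive case to \Cref{consec} leaves $n=4$ uncovered, since that lemma assumes $n>4$ and for $n=4$ every opposite-parity pair is consecutive; this boundary case needs a separate one-line check (the paper's version of the argument happens to cover it).
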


\begin{proof}
By considering instead the isomorphic tournament $W_{uR_{-i}}$ or $W_{uR_{-j}}$ if necessary, we may assume $i=0$ and $1 \le j \le m$ is odd. 

By~\Cref{cor-triangle-types}, if $b(a) \ge 1$ then we are done; also if any of the triangles involving $i$, $j$, and $\ell$ for $\ell \in \mathbb Z_n$ with $\ell \neq i,j$ is directed then we are done. So every vertex in $\{j+1, \ldots, 2m-1\}$ must be in either an ``in" triangle or an ``out" triangle with $a$, from the perspective of $a$. This means that each of these vertices is either a mutual inneighbour of $0$ and $j$, or a mutual outneighbour of $0$ and $j$.

For $r \in \{j+1,\ldots, 2m-2\}$,~\Cref{alternate} tells us that $r$ is a mutual inneighbour of $0$ and $j$ if and only if $r+1$ is a mutual outneighbour of $0$ and $j$ (we apply the lemma to either $0$ or $j$ depending on the parity of $r$). This implies that all the vertices of one parity in $\{j+1, \ldots, 2m-1\}$ are mutual inneighbours of $0$ and $j$, while all the vertices of the other parity are mutual outneighbours of $0$ and $j$.

If $j \le m-1$ then the arc between $2m-1$ and $0$ is parallel to and in the same Hamilton cycle as the arc between $2m-1-j$ and $j$. Thus $2m-1$ and $2m-1-j$ are either both mutual inneighbours, or both mutual outneighbours of $0$ and $j$. But $2m-1$ is odd, and $2m-1-j$ is even; this is a contradiction that completes the proof in this situation.

The possibility remains that $j=m$. In this case, the arc between $2m-1$ and $0$ is parallel to and in the same Hamilton cycle as the arc between $j=m$ and $j-1=m-1$. In this case, however, this means that exactly one of $2m-1$ and $m-1$ is a mutual inneighbour of $0$ and $j$. Note that these vertices have opposite parity. Moreover, the same argument as above now applied to the set $\{1, \ldots, m-1\}$ of vertices, tells us that all of the vertices of one parity in this set are mutual inneighbours of $0$ and $j$, while all the vertices of the other parity are mutual outneighbours of $0$ and $j$.

Putting these together, we see that either all of the even vertices in $\mathbb Z_n$ except $0$ are mutual outneighbours of $0$ and $j$, while $1$ is a mutual inneighbour of $0$ and $j$; or they are all mutual inneighbours of $0$ and $j$, while $1$ is a mutual outneighbour of $0$ and $j$. The former case implies that there is an arc from $0$ to $2\ell$ for every $1\le \ell \le m-1$, while the latter implies the arcs are in the opposite direction. In the first case, $u_\ell=0$ for $0 \le \ell \le m-1$; in the second case, $u_\ell=1$ for $0 \le \ell \le m-1$. So we either have $u=0_m$ or $u=1_m$. After applying $R_i$ or $R_j$ to $u$ to return to the original tournament, we conclude $u\in \{1_mR_i,1_mR_j\}$, as desired.
\end{proof}

\section{Arcs whose endpoints have the same parity}

In this section, we consider the remaining possible type of arc: arcs whose endpoints have the same parity. We have already seen that when $m$ is odd, $W_{1_m}$ has a number of arcs that lie in unique directed triangles. When the endpoints of an arc have opposite parity, the information~\Cref{alternate} provides about the outneighbours and inneighbours of one endpoint complements the information provided by the other endpoint. When the endpoints have the same parity, both provide the same information. This makes it  much harder to pin down which arcs whose endpoints have the same parity can be in unique directed triangles. In particular, there seem to be many possible Walecki tournaments that have some arc $a$ whose endpoints have opposite parity, and $d(a)=1$. The amazing thing that is not so difficult to prove, though, is that in all cases the third vertex of the unique directed triangle must be $*$. For our purposes, this is all we need.

We begin with a preliminary result that narrows down the possible third vertices.

\begin{lem}\label{mults-of-i}
Let $a$ be an arc in a Walecki tournament $W_u$ whose endpoints are $i,j \in \mathbb Z_n$, where $j-i \le m$ and $j$ and $i$ have the same parity. Let $t$ be the additive order of $j-i$ in $\mathbb Z_n$. If $d(a)=1$ then the other vertex of the directed triangle containing $a$ lies in $\{j+(j-i), j+2(j-i),\ldots, j+(t-2)(j-i),*\}$.
\end{lem}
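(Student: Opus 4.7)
\textit{Proof plan.}
The strategy is proof by contradiction: I assume both that $d(a)=1$ (given) and that $\ell \neq *$ while $\ell \notin \{j+r, j+2r, \ldots, j+(t-2)r\}$, where $r = j-i$, and derive a contradiction. The set $\{i+kr : k = 0,1,\ldots,t-1\}$ is precisely $\{i,j,j+r,\ldots,j+(t-2)r\}$, so this hypothesis says exactly that $\ell$ lies outside the coset $i + \{0, r, \ldots, (t-1)r\}$. Without loss of generality orient the directed triangle as $(i,j), (j,\ell), (\ell,i) \in W_u$, and set $\ell_k := \ell - kr$ for $k = 0, 1, \ldots, t-1$. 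These are $t$ distinct elements of $\mathbb Z_n$ (since $r$ has additive order $t$), and our hypothesis forces each of them to lie outside $\{i,j\}$.

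The engine of the argument is the observation that for every $k$ the (unordered) arcs $\{i,\ell_k\}$ and $\{j,\ell_{k+1}\}$ have the same endpoint sum $i + \ell - kr$ (using $j = i+r$), and so lie in a common Hamilton cycle of the Walecki decomposition. Starting from the known arc $(\ell,i) = (\ell_0,i) \in W_u$, the parallel-arc relation in its cycle pins down the orientation in $W_u$ of the arc between $j$ and $\ell_1$. Because $\ell_1 \notin \{i,j,\ell,*\}$, \Cref{cor-triangle-types} together with $d(a) = 1$ rules out the triangle on $\{i,j,\ell_1\}$ being directed or of bypass type, so $\ell_1$ must be either a mutual inneighbour or a mutual outneighbour of $\{i,j\}$ — and the arc just computed determines which. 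This in turn fixes the orientation of $\{i,\ell_1\}$; the same parallel-arc move now applied to $\{i,\ell_1\}$ produces orientation information about $\{j,\ell_2\}$, and so on. Iterating, I obtain determinations of all arcs between $\{i,j\}$ and each of $\ell_1,\ldots,\ell_{t-1}$.

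The contradiction arises from one of two possibilities. Either (i) at some intermediate step the forced orientation of an arc $\{i,\ell_k\}$ conflicts with the value of $u_s$ for a Walecki cycle already pinned down by one of the three original arcs of the triangle, in which case $\ell_k$ is not in fact mutual, producing a second directed (or a bypass) triangle on $\{i,j,\ell_k\}$ and hence $d(a)\ge 2$ by \Cref{cor-triangle-types}; or (ii) the chain closes at step $k = t$: the cycle used there has endpoint sum $i + \ell_{t-1} = i + \ell + r = j + \ell$, so it coincides with the cycle already containing $(j,\ell)$, and the derivation yields a prescribed orientation of $\{j,\ell\}$ that must match the given $(j,\ell)\in W_u$. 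The main obstacle I expect is verifying that at least one of (i) and (ii) is forced whenever $\ell \notin i + \{0,r,\ldots,(t-1)r\}$: this will require a careful tracking, around the orbit of the shift $v \mapsto v-r$, of which side of each cycle's midpoint the chain vertices sit on, analogous in spirit to the parity-based arguments used in the proofs of \Cref{consec-arcs} and \Cref{parallel}, rather than reducing to a single clean identity.
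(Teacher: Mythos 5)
Your overall strategy is sound and is genuinely different from the paper's. The paper does not fix a putative ``bad'' third vertex $\ell$ at all: after normalising $i=0$ it assumes that every triangle with third vertex in the candidate set $\{2j,\dots,(t-1)j,*\}$ is ``in'' or ``out'', uses the triangle at $*$ to fix $u_0=u_j=1$ (hence the arcs $j\to -j$ and $2j\to 0$), and then propagates the mutual-neighbour condition around the coset $\langle j\rangle$ itself via the same parallel-arc mechanism you describe, until the derived arc $(t-1)j\to 0$ contradicts $j\to -j$. You instead chain around the \emph{other} coset $\ell+\langle r\rangle$, anchored at the two arcs $(\ell,i)$ and $(j,\ell)$ of the given triangle, with $*$ playing no role. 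Both arguments rest on the same two ingredients (equal endpoint sums force a common Hamilton cycle; $d(a)=1$ plus \Cref{cor-triangle-types} forces every admissible third vertex to be a mutual in- or out-neighbour of $i$ and $j$), and your use of them is correct, including the checks that the $\ell_k$ avoid $i$, $j$ and $\ell$.

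The genuine gap is the one you flag yourself: you never verify that the chain closes \emph{inconsistently}, and that verification is the entire content of the lemma. If the chain could close consistently for some $\ell$ outside the coset, your argument would prove nothing, so a careful referee must treat the proposal as incomplete. The gap is fillable, and more cleanly than you anticipate. Normalise $i=0$, $j=r$. At step $k$ the arc between $0$ and $\ell_k$ and the parallel arc between $r$ and $\ell_{k+1}$ both have endpoint sum $\ell_k$; writing out the arcs of $C_s^{\pm}$ one checks that the transferred orientation ``flips'' (i.e.\ $0$ and $r$ lie on opposite sides of the relevant half-interval of that cycle) exactly when $\ell_k\in\{1,2,\dots,2r-1\}$, and otherwise agrees. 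Hence the chain returns to the edge $\{r,\ell\}$ with a net sign governed by $N=\#\{k:\ell_k\in\{1,\dots,2r-1\}\}$. Since $\{\ell_k\}$ is precisely the residue class of $\ell$ modulo $d=\gcd(r,n)$ and $\{0,1,\dots,2r-1\}$ meets every class in exactly $2r/d$ points, the hypothesis $\ell\not\equiv 0\pmod d$ (i.e.\ $\ell\notin i+\langle r\rangle$) gives $N=2r/d$, which is even; this is exactly the parity that contradicts the given orientation of $(j,\ell)$. (For $\ell$ in the coset the count drops to $2r/d-1$, which is odd and consistent --- as it must be, since those vertices are permitted by the statement.) With that computation supplied, your case (i) versus (ii) dichotomy is unnecessary: the contradiction always occurs at closure.
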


\begin{proof}
To simplify our notation and arguments, we will work in $W_{uR_{-i}}$ so that we can take $i=0$ and $j-i=j$ as the endpoints of $a$, and the set of possible third vertices becomes $\{2j, \ldots, (t-1)j,*\}$.

Our goal is to show that it is not possible for all of the triangles containing $a$ whose other vertex lies in $\{2j, \ldots, (t-1)j,*\}$ to be either ``in" or ``out" from the perspective of $a$. This implies that either one of them is a ``bypass" triangle, in which case $b(a) \ge 1$ and by~\Cref{cor-triangle-types} $d(a)\ge 2$, a contradiction, or one of them is directed, and must therefore be the unique directed triangle containing $a$. Therefore, towards a contradiction, suppose that all of the triangles containing $a$ whose other vertex lies in $\{2j, \ldots, (t-1)j,*\}$ are either ``in" or ``out" from the perspective of $a$. 

If $j=m$ then the arcs between $*$ and each of $0$ and $j$ are both in $C_0^+$ or in $C_0^-$, and one must begin at $*$ while the other ends at $*$, producing an immediate contradiction. Henceforth we assume $j<m$.

In the argument that follows, we may reverse the direction of all arcs and reach the same conclusion. So we begin by assuming without loss of generality that there are arcs from $*$ to both $0$ and $j$; that is, $u_0=1=u_j$. This implies that there is an arc from $j$ to $-j$, and that there is an arc from $2j$ to $0$.

Our assumption that each triangle is ``in" or ``out" from the perspective of $a$, allows us to conclude that for every $2 \le s \le t-1$, $sj$ is either a mutual inneighbour or a mutual outneighbour of $0$ and $j$. This in turn is equivalent to the existence of an arc parallel to that between $j$ and $sj$ (from the same Hamilton cycle) between $0$ and $(s+1)j$. Based on our initial choice of directions, it turns out at each step (inductively) that if after reducing modulo $n$ we have $0<(s+1)j<j$, then this arc goes from $0$ to $(s+1)j$; otherwise it goes from $(s+1)j$ to $0$. Since $0<j<m$, we must have $j<(t-1)j<n$ after reducing modulo $n$. Thus we eventually conclude that there is an arc from $-j=(t-1)j$ to $0$. But this contradicts the existence of the arc from $j$ to $-j$, completing the proof.
\end{proof}

We can now show that the third vertex must in fact be $*$.

\begin{lem}\label{even}
Let $a$ be an arc in a Walecki tournament whose endpoints are $i,j \in \mathbb Z_n$, where $j-i \le m$ and $j$ and $i$ have the same parity. If $d(a)=1$, then the other vertex of the directed triangle containing $a$ is $*$.
\end{lem}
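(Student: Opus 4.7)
The plan is to leverage \Cref{mults-of-i}, which — after a register shift reduces us to $i = 0$ (so $j$ is even with $2 \le j \le m$) — restricts the third vertex of the unique directed triangle to the set $\{2j, 3j, \ldots, (t-1)j, *\}$. I would rule out every multiple of $j$ in this set and leave only $*$, by arguing by contradiction: assume the third vertex is $sj$ for some $s \in \{2, \ldots, t-1\}$ and produce a second directed triangle containing $a$.

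The first step pins down the behaviour at $*$. Because the unique directed triangle is at $sj \neq *$, the triangle containing $a$ with third vertex $*$ is not directed, and since $d(a) = 1$ forces $b(a) = 0$ via \Cref{cor-triangle-types}, it is not a bypass triangle either. It is therefore ``in'' or ``out'' from the perspective of $a$, which is equivalent to $u_0 = u_j$. Since complementing the signature reverses every arc of $W_u$ and preserves the count of directed triangles through $a$, I may further assume $u_0 = u_j = 1$.

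The second step produces the extra directed triangle. Under $u_0 = 1$ the cycle $C_0^+$ immediately contributes the arc $0 \to 1$, and under $u_j = 1$ the cycle $C_j^+$ contributes the arc $j \to j+1$. The three arcs between $\{0, j\}$, between $\{1, j\}$, and between $\{0, j+1\}$ all lie in the single Hamilton cycle $C_{j/2}$, which in the orientation $C_{j/2}^+$ traces through the consecutive four-vertex segment $1, j, 0, j+1$. So the directions of these three arcs are controlled by the single entry $u_{j/2}$. If $u_{j/2} = 1$, then $j \to 0$, $0 \to 1$, and $1 \to j$ close into the directed triangle $j \to 0 \to 1 \to j$ on $\{0, 1, j\}$; if $u_{j/2} = 0$, then $0 \to j$, $j \to j+1$, and $j+1 \to 0$ close into the directed triangle $0 \to j \to j+1 \to 0$ on $\{0, j, j+1\}$. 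In either subcase the new third vertex ($1$ or $j+1$) is odd while $sj$ is even, so this is a genuinely different directed triangle containing $a$, forcing $d(a) \ge 2$ and contradicting the hypothesis.

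The main obstacle is that \Cref{alternate}, the parity-switching tool that powered Sections 3 and 4, gives no information when the two endpoints of $a$ share a parity. The substitute is to work inside the Hamilton cycle $C_{j/2}$ that actually contains $a$; its consecutive segment through $a$, coupled with the ``free'' initial arcs supplied by $u_0 = u_j = 1$, is exactly what is needed to exhibit the second directed triangle.
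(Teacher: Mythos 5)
Your proof is correct, but it reaches the conclusion by a genuinely different route from the paper. The paper's proof works locally at the assumed third vertex $sj$: from the arcs $sj\to 0$ and $j\to sj$ it reads off the signature entries $u_{sj/2}$ and $u_{(s+1)j/2}$ (splitting into the cases $0<sj<j$ and $j<sj<n$), and these two entries force the arcs between $0,j$ and $sj+1$ to make $a$ a bypass arc in the triangle on $\{0,j,sj+1\}$, contradicting $b(a)=0$. You instead work locally at the arc $a$ itself: you observe that once the unique directed triangle is assumed to sit at $sj\neq *$, the triangle through $*$ must be ``in'' or ``out'', which pins down $u_0=u_j$, and then the single Hamilton cycle $C_{j/2}$ (whose segment $1,j,0,j+1$ carries all three relevant arcs) hands you a second directed triangle at the odd vertex $1$ or $j+1$, contradicting uniqueness. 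Your version has the advantage of disposing of all even candidates $sj$ simultaneously with no case split on $s$, and it only uses~\Cref{mults-of-i} for the parity of the candidate third vertices; the paper's version never touches the $*$-triangle and instead mirrors the bypass-producing computation already used in~\Cref{mults-of-i}. One small point you should make explicit: your equivalence ``in/out at $*$ iff $u_0=u_j$'' presumes $j<m$, since for $j=m$ the arcs from $*$ to $0$ and to $m$ both lie in $C_0$ and the $*$-triangle is never ``in'' or ``out''. This costs nothing, because when $j=m$ the additive order $t$ equals $2$, the candidate set from~\Cref{mults-of-i} is just $\{*\}$, and there is no $s$ to rule out; but the sentence as written should be guarded by that observation.
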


\begin{proof}
Again, to simplify our notation and arguments, we will work in $W_{uR_{-i}}$ so that we can take $i=0$ and $j-i=j$ as the endpoints of $a$.
By~\Cref{mults-of-i}, the other vertex of the directed triangle lies in $\{2j,\ldots, (t-1)j,*\}$, where $t$ is the additive order of $j$ in $\mathbb Z_n$. 

Towards a contradiction, suppose that $(0,j,sj)$ is a directed triangle for some $2 \le s \le t-1$. (Reversing the direction of this cycle and of all subsequent arcs in the argument yields the same conclusion.) Since $j$ is even, $sj$ is also even. 

We distinguish two possibilities, depending on whether after reduction modulo $n$ we have $0<sj<j$, or $j<sj<n$. 

Suppose first that $j<sj<n$.
Since there is an arc from $sj$ to $0$, we must have $u_{sj/2}=1$, and there is also an arc from $0$ to $sj+1$. Since there is an arc from $j$ to $sj$, we must have $u_{(s+1)j/2}=0$, and there is also an arc from $sj+1$ to $i$. But now $a$ is a bypass arc in the triangle on $0$, $j$, and $sj+1$, meaning $b(a) \ge 1$ so $d(a) \ge 2$ by~\Cref{cor-triangle-types}, a contradiction.

Now suppose $0<sj<j$. Since there is an arc from $sj$ to $0$,  we must have $u_{sj/2}=0$, and there is also an arc from $0$ to $sj+1$. Since there is an arc from $j$ to $sj$, we must have $u_{(s+1)j/2}=1$, and there is also an arc from $sj+1$ to $j$. But now $a$ is a bypass arc in the triangle on $0$, $j$, and $sj+1$, meaning $b(a) \ge 1$ so $d(a) \ge 2$ by~\Cref{cor-triangle-types}, again a contradiction.

Since there is no $2 \le s\le t-1$ such that $(0,j,sj)$ can be a directed triangle (in either direction),~\Cref{mults-of-i} implies that the other vertex of the directed triangle containing $a$ must be $*$.
\end{proof}

\section{Automorphisms and isomorphisms of Walecki tournaments}

We begin this section by producing a result that summarises the results of the previous sections. Note that the following result is not true for the (unique up to isomorphism) Walecki tournament on $5$ vertices, which does have at least one arc that lies in a unique directed triangle whose third vertex is not $*$.

\begin{thm}\label{unique-star}
Suppose that $a$ is an arc in a Walecki tournament $W_u$ on at least $7$ vertices that is in exactly one directed triangle. Then the third vertex of that directed triangle is $*$.

Furthermore, either $m$ is odd, $u=1_mR_i$ for some $i$, and the endpoints of $a$ are $i$ and $i+m$, or the endpoints of $a$ are elements of $\mathbb Z_n$ that have the same parity.
\end{thm}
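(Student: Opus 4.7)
The strategy is a case analysis on the parity of the endpoints $i,j$ of $a$ and the identity of the third vertex $\ell$ of the unique directed triangle, invoking the accumulated lemmas from the previous sections. First, if $i$ and $j$ have the same parity, then neither endpoint is $*$, so both lie in $\mathbb Z_n$; after relabelling if necessary so that $j-i\le m$, \Cref{even} applies and forces $\ell=*$, yielding the ``same parity'' alternative of the theorem. This disposes of that case at once.

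Otherwise $i$ and $j$ have opposite parity, and by \Cref{star} neither endpoint is $*$. If $\ell=*$, then \Cref{parallel} applies: its first alternative contradicts $d(a)=1$, so we must be in its second alternative, giving $j=i+m$ and $u\in\{1_mR_i,1_mR_j\}$. Since $i$ and $i+m$ have opposite parity, $m$ is odd, which is exactly the first alternative of the theorem.

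The remaining case is $i,j\in\mathbb Z_n$ of opposite parity with $\ell\ne *$, and I need to derive a contradiction. If $i,j$ are consecutive, \Cref{consec} gives $d(a)>1$ (applicable since $n\ge 6>4$). Otherwise I invoke \Cref{odd-most-cases}: its first bullet contradicts $d(a)=1$ directly; its second bullet, $\ell\in\{i\pm 1, j\pm 1\}$, is contradicted by \Cref{j-ell-consec} after possibly swapping the labels of $i$ and $j$, which is legitimate because that lemma depends only on the unordered pair of endpoints of $a$. The remaining two bullets give, after relabelling, $j'=i'+(2m+1)/3$ or $j'=i'+(4m+1)/3$ with $\ell=2j'-i'-1$; since the hypothesis that $i',j'$ have opposite parity forces the shift $(2m+1)/3$ or $(4m+1)/3$ to be odd, the extra hypothesis needed in \Cref{thirds-1} or \Cref{thirds-2} is automatic, and either lemma produces the contradiction $d(a)>1$.

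The proof is essentially bookkeeping, and the main subtlety I anticipate is precisely this last observation: verifying that the odd-parity conditions on $(2m+1)/3$ and $(4m+1)/3$ required by \Cref{thirds-1} and \Cref{thirds-2} follow automatically from the opposite-parity assumption on $i',j'$, so that every branch of the case analysis closes without an orphaned case.
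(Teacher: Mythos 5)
Your proposal is correct and follows essentially the same route as the paper's proof: the same case split on the endpoints (one being $*$, consecutive, opposite parity non-consecutive, same parity) dispatched to \Cref{star}, \Cref{consec}, \Cref{odd-most-cases}, \Cref{j-ell-consec}, \Cref{thirds-1}, \Cref{thirds-2}, \Cref{parallel}, and \Cref{even}. Your explicit check that the oddness of $(2m+1)/3$ and $(4m+1)/3$ required by \Cref{thirds-1} and \Cref{thirds-2} follows from the opposite-parity assumption is a worthwhile detail that the paper leaves implicit.
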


\begin{proof}
If either endpoint of $a$ is $*$, this is~\Cref{star}. If the endpoints of $a$ are consecutive then since $n \ge 6$, this is~\Cref{consec}. If the endpoints of $a$ have opposite parity but are not consecutive, then this follows from one of~\Cref{odd-most-cases},~\Cref{j-ell-consec},~\Cref{thirds-1}, or~\Cref{thirds-2}, together with~\Cref{parallel} to complete the ``furthermore". Finally, if the endpoints of $a$ have the same parity then this follows from~\Cref{even}.
\end{proof}

\begin{cor}\label{main-cor}
Suppose that the Walecki tournament $W_u$ on at least $7$ vertices contains an arc $a$ that lies in a unique directed triangle. Then every automorphism of $W_u$ fixes $*$. Moreover, if $W_u \cong W_v$, then any isomorphism must map the vertex labelled $*$ in $W_u$ to the vertex labelled $*$ in $W_v$.
\end{cor}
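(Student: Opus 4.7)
The plan is to deduce this corollary directly from Theorem~\ref{unique-star}, using only the elementary fact that any (iso)morphism of tournaments sends directed triangles to directed triangles, and therefore sends arcs lying in exactly one directed triangle to arcs with the same property.

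For the automorphism statement, I would take an arbitrary $\phi \in \Aut{W_u}$ and apply it to the hypothesized arc $a$. The image $\phi(a)$ is again an arc of $W_u$, and since $\phi$ preserves the induced subdigraph on any three vertices, $\phi(a)$ also lies in exactly one directed triangle of $W_u$. By Theorem~\ref{unique-star}, the third vertex of the unique directed triangle through $a$ equals $*$, and likewise the third vertex of the unique directed triangle through $\phi(a)$ equals $*$. Since $\phi$ maps the first triangle bijectively onto the second and sends the two endpoints of $a$ to the two endpoints of $\phi(a)$, it must map the remaining vertex of the first (namely $*$) to the remaining vertex of the second (again $*$). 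Hence $\phi(*) = *$.

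For the isomorphism statement I would run the identical argument with an isomorphism $\psi \colon W_u \to W_v$ in place of $\phi$. The arc $\psi(a)$ lies in a unique directed triangle of $W_v$, and since $W_v$ is isomorphic to $W_u$ it also has at least $7$ vertices, so Theorem~\ref{unique-star} applies to $W_v$ and tells us that the third vertex of that triangle is the $*$-vertex of $W_v$. Matching endpoints to endpoints then forces $\psi$ to send the $*$-vertex of $W_u$ to the $*$-vertex of $W_v$.

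There is no genuine obstacle beyond what Theorem~\ref{unique-star} already provides: the content of this corollary is essentially the translation of that theorem from the assertion ``the third vertex of this particular directed triangle is $*$'' into the assertion ``the vertex $*$ is fixed by every automorphism, and is preserved by every isomorphism.'' The only point requiring even a moment of care is the observation that images of induced directed triangles under tournament (iso)morphisms are again directed triangles, which is immediate from the fact that an (iso)morphism preserves induced subdigraphs on any fixed subset of vertices.
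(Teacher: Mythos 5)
Your proposal is correct and follows essentially the same route as the paper: both deduce everything from Theorem~\ref{unique-star} by observing that an isomorphism carries an arc in a unique directed triangle to an arc in a unique directed triangle, and hence carries the third vertex ($*$) of one triangle to the third vertex ($*$) of the other. The only cosmetic difference is that the paper handles the isomorphism case first and notes the automorphism case is the special instance $W_v = W_u$, whereas you argue the two cases in the opposite order; you also explicitly record that $W_v$ has at least $7$ vertices, a point the paper leaves implicit.
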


\begin{proof}
Since an automorphism is an isomorphism from $W_u$ to itself, the second statement implies the first. Suppose, then, that $W_u$ contains an arc $a$ that lies in a unique directed triangle. By~\Cref{unique-star}, this triangle must have $*$ as its third vertex. Any isomorphism from $W_u$ to $W_v$ must map $a$ to some arc $a'$ in $W_v$ that lies in a unique directed triangle. Furthermore, it must map the unique directed triangle containing $a$ to the unique directed triangle containing $a'$. By~\Cref{unique-star}, the third vertex of the unique directed triangle containing $a'$ must be the vertex of $W_v$ that is labelled $*$. Thus our isomorphism must map the vertex labelled $*$ in $W_u$ to the vertex labelled $*$ in $W_v$.
\end{proof}

In the next and final section of this paper, we define a fairly significant family of Walecki tournaments that do contain an arc that lies in a unique directed triangle. 

\section{Walecki Tournaments in which $*$ is uniquely determined}

We begin by defining a family of signatures. 

\begin{defn}
Let $m$ be even, say $m=2k$. Let $\mathcal S$ be the set of binary strings $u$ of length $m$ that have the following properties:
\begin{itemize}
\item $u=u_0 \ldots u_{m-1}$; and
\item for $0 \le i \le k-1$, $u_{i+k} \neq u_{i}$. 
\end{itemize}
So we can pick any binary string of length $k$ for the first $k$ entries, but the remaining entries are completely determined by those first $k$ entries. 
\end{defn}

Now we show that when $u \in \mathcal S$, the arc between $0$ and $m$ in $W_u$ is in exactly one directed triangle.

\begin{thm}
Let $m=2k$ with $k\ge 3$, and $u \in \mathcal S$. In $W_u$, if $a$ is the arc between $0$ and $m$, then using the notation of~\Cref{triangle-types}, $d(a)=1$.

In particular, this means that every automorphism of $W_u$ fixes $*$. Moreover, if $W_u \cong W_v$, then any isomorphism must map the vertex labelled $*$ in $W_u$ to the vertex labelled $*$ in $W_v$.
\end{thm}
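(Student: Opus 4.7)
The plan is to first observe that the automorphism and isomorphism conclusions follow immediately from~\Cref{main-cor} once we prove $d(a) = 1$, so the substantive work is entirely in the triangle count. I will exhibit one directed triangle containing $a$, namely $\{0, m, *\}$, and then show that every other candidate third vertex $v \in \mathbb{Z}_n \setminus \{0, m\}$ produces an ``in'' or ``out'' triangle from the perspective of $a$.

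For the triangle $\{0, m, *\}$: both arcs incident to $*$, namely $(*, 0)$ and $(m, *)$, lie in $C_0^+$, so their orientations in $W_u$ are governed by $u_0$. The arc between $0$ and $m$ lies in $C_k$: the ``diameter'' edge in $C_0^+$ is $(k, -k)$, and shifting by $k$ gives $(m, 0)$ in $C_k^+$, so this arc is governed by $u_k$. Since $u \in \mathcal{S}$ forces $u_k = 1 - u_0$, a direct check of the two cases $u_0 = 0$ and $u_0 = 1$ shows the triangle $\{0, m, *\}$ is always directed in $W_u$ (the directions of all three arcs flip together).

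For $v \in \mathbb{Z}_n \setminus \{0, m\}$, write $v = 2s$ when $v$ is even (so $s \in \{1, \ldots, m-1\} \setminus \{k\}$) and $v = 2s + 1$ when $v$ is odd (so $s \in \{0, \ldots, m-1\}$). Using the description of the edges of $C_0$ together with the shift $\rho^j$, one checks that the arc between $0$ and $v$ lies in $C_s$ and the arc between $m$ and $v$ lies in $C_{(s+k)\bmod m}$, with positive-direction orientations as follows: in the even case, $C_s^+$ contains $(v, 0)$ and $C_{(s+k)\bmod m}^+$ contains $(m, v)$; in the odd case, $C_s^+$ contains $(0, v)$ and $C_{(s+k)\bmod m}^+$ contains $(v, m)$. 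In both parities, forcing the two arcs between $v$ and $\{0, m\}$ to both point toward $v$, or both away from $v$, requires $u_s$ and $u_{(s+k)\bmod m}$ to take opposite values; this is exactly what the hypothesis $u \in \mathcal{S}$ guarantees, since $u_{(s+k)\bmod m} = 1 - u_s$ for every $s \in \{0, \ldots, m-1\}$ (a short check handling $s < k$ and $s \ge k$ separately, using $m = 2k$). Hence $v$ is either a common out-neighbour or a common in-neighbour of $0$ and $m$, and the induced triangle on $\{0, m, v\}$ is ``out'' or ``in'' from the perspective of $a$, never directed.

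The main obstacle will be the bookkeeping across the four subcases (parity of $v$ crossed with $s < k$ versus $s \ge k$), together with the need to convert between $C_{s+k}^+$ and $C_{(s+k)-m}^-$ when the unreduced index exceeds $m - 1$. Once the positive-direction orientations above are verified uniformly, the $\mathcal{S}$ condition yields the required cancellation, so $\{0, m, *\}$ is the unique directed triangle containing $a$ and $d(a) = 1$. The conclusions about automorphisms and isomorphisms then follow immediately from~\Cref{main-cor}.
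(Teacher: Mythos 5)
Your proposal is correct and follows essentially the same route as the paper: both arguments use the condition $u_{(s+k)\bmod m}=1-u_s$ to show that every vertex of $\mathbb Z_n\setminus\{0,m\}$ is a common in-neighbour or common out-neighbour of $0$ and $m$, so the only directed triangle through $a$ is the one through $*$. The only cosmetic differences are that you organize the computation by the candidate third vertex $v$ rather than by the cycle pair $C_j$, $C_{j+k}$, and that you verify the $*$-triangle directly where the paper deduces $d(a)=1$ from $i(a)=m-1$ via Lemma~\ref{triangle-types}.
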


\begin{proof}
Note that each arc in $W_u$ arises from a directed version of either the cycle $C_j$ or the cycle $C_{j+k}$ for some $0 \le j \le k-1$. So let $0 \le j \le k-1$. If $u_j=1$ then every arc in $C_j^+$ that does not involve $*$ has the form $(j-\ell,j+1+\ell)$ for some $0 \le \ell \le m-1$, or $(j+1+\ell,j-1-\ell)$ for some $0 \le \ell \le m-1$; if $u_j=0$ then $C_j^-$ has the same underlying edges but each arc has the opposite direction. Likewise, if $u_{j+k}=0$ then every arc in $C_{j+k}^-$ that does not involve $*$ has the form $(j+k+1+\ell,j+k-\ell)$ for some $0 \le \ell \le m-1$, or $(j+k-1-\ell,j+k+1+\ell)$ for some $0 \le \ell \le m-1$; if $u_{j+k}=1$ then $C_{j+k}^+$ has the same underlying edges but each arc has the opposite direction. 

Recall that since $u \in \mathcal S$, we have $u_j \neq u_{j+k}$. The formulas of the previous paragraph tell us that if $u_j=1$ then the arcs involving the vertex $0$ in $C_j^+$ are $(2j,0)$ (if $j=0$ then this is replaced by $(*,0)$) and $(0,2j+1)$, and in $C_{j+k}^-$ are $(2j+m+1,0)$ and $(0,2j+m)$ (and the reverse of these arcs if $u_j=0$). Meanwhile, the arcs involving the vertex $m$ in $C_j^+$ are $(2j+m+1,m)$ and $(m,2j+m)$, and in $C_{j+k}^-$ are $(2j,m)$ and $(m,2j+1)$ (and the reverse of these arcs if $u_j=0$). Since every vertex other than $*$ has one of the forms $2j+1$, $2j$, $m+2j+1$ or $m+2j$ for some $0 \le j \le k-1$, we see that because $u \in \mathcal S$, for each vertex $i$ of our tournament other than $*$, $0$, and $m$, we either have arcs from both $0$ and $m$ to $i$, or arcs from $i$ to both $0$ and $m$. Thus except for the triangle involving $*$, every triangle that includes $0$ and $m$ is either an ``in" triangle or an ``out" triangle.

Thus, $o(a)=i(a)=m-1$, so using~\Cref{triangle-types}, $d(a)=1$ (the triangle involving $*$ is the directed triangle). The final conclusion is an immediate consequence of~\Cref{main-cor}
\end{proof}

It is worth pointing out that $1_{2k}R_k \in \mathcal S$, so at least some of the Walecki tournaments we have identified in these results are isomorphic to those whose automorphism groups were already known to be trivial through the work of Ales. However, his result did not directly show that there could not be a Walecki tournament $W_v$ such that $W_{1_m} \cong W_v$ but the vertex $*$ of $W_{1_m}$ maps to some vertex not labeled $*$ in $W_v$. Furthermore, there are definitely Walecki tournaments whose signature lies in $S$ that are not isomorphic to $W_{1_m}$ for the appropriate $m$. Specifically, the signature of $W_{1001}$ lies in $\mathcal S$, but it can easily be checked computationally that the automorphism group of $W_{1001}$ is cyclic of order $3$, so $W_{1001}$ cannot be isomorphic to $W_{1111}$, whose automorphism group is trivial.

It may be possible to further extend these ideas. This may be possible very directly by finding other families of signatures whose Walecki tournaments include an arc that lies in a unique directed triangle.
It may take a more indirect approach, for example by looking at the other end of the possible values for the parameters we have studied here, and finding families of signatures whose Walecki tournaments include a unique arc $a$ with the property that $i(a)=o(a)=0$. It may require a more complex approach such as counting the numbers of arcs that lie in a particular number of directed triangles. Additional research along any of these lines would be of interest. 

It is not the case that every Walecki tournament has an arc that is in exactly one directed triangle, whether $m$ is even or odd. Neither $W_{10001}$ nor $W_{110011}$ has such an arc. So~\Cref{main-cor} does not apply to all Walecki tournaments.

The family $\{W_u: u \in \mathcal S\}$ does not include any Walecki tournaments whose signature has odd length. However, when the signature has even length $2k$, it covers $2^k$ of the possible $2^{2k}$ signatures (so long as $k \ge 3$). If combined with the isomorphisms produced by the complementing register shift $R_1$, it covers even more. For example, when $k=3$ there are $2^3=8$ of the $2^6=64$ signatures in $\mathcal S$, but applying various powers of $R_1$ results in a total of $48$ signatures ($4$ of the $6$ isomorphism classes under the action of $R_1$).

\end{document}